\theoremstyle{plain} 
\newtheorem{thm}{Theorem}
\newtheorem{cor}[thm]{Corollary} 
\newtheorem{lem}[thm]{Lemma}
\theoremstyle{definition} 
\newtheorem{defn}{Definition}
\newtheorem{ex}{Example}
\title{Linking in Cyclic Branched Covers\\ and Satellite (non)-Homomorphisms}
\author{Patricia Cahn and Alexandra Kjuchukova }
\date{January 2023}
\begin{document}
\raggedbottom
\maketitle
\begin{abstract}
    Let $K\subset S^3$ be a knot and $\eta, \gamma \subset S^3\backslash K$ be simple closed curves.    Denote by $\Sigma_q(K)$ the $q$-fold cyclic branched cover of $K$. We give an explicit formula for computing the linking numbers between lifts of $\eta$ and $\gamma$ to $\Sigma_q(K)$. 
    As an application, we evaluate, in a variety of cases, an obstruction to satellite operations inducing homomorphisms on smooth concordance.  
   
\end{abstract}

\section{Introduction}

Given two rationally null-homologous, disjoint curves in an oriented 3-manifold, their linking number $\text{lk}(\alpha,\beta)\in \mathbb{Q}$ is well-defined and symmetric \cite{birman1980seifert}.  Linking numbers in branched covers of $S^3$ were first computed by Reidemeister~\cite{reidemeister2013knotentheorie} and were one of the early methods for distinguishing knots. In the 1960s, Ken Perko wrote a computer program for computing linking numbers between the branch curves in 3-fold irregular branched covers~\cite{perko1964invariant} and used it to help complete the classification of knots through 10 crossings. The authors extended this technique to computing linking numbers, in 3-fold irregular covers, between ``pseudo-branch curves", that is, connected components of lifts to the branch cover of curves which lie in the complement of the branching set~\cite{cahn2021linking}. Perko's computation of linking numbers between branch curves was also generalized to all odd-fold irregular dihedral covers in~\cite{cahn2021dihedral}. Linking numbers in branched covers can be used~\cite{cahn2018computing} to compute the Rokhlin $\mu$ invariant~\cite{cappell1975invariants} and Kjuchukova's $\Xi_p$ invariant~\cite{kjuchukova2018dihedral, geske2021signatures}, among other applications. 

In this note, we give a formula for computing linking numbers between pseudo-branch curves in {\it cyclic} branched covers. As an application, we show, using an obstruction from~\cite{lidman2022linking}, that certain satellite patterns do not induce homomorphisms on smooth concordance. We thus verify a conjecture of Hedden~\cite{heddenconjecture} for these patterns. 
Linking numbers of the types considered here can also detect satellite patterns of infinite rank, using~\cite{hedden2021satellites}.

There exist alternative methods for computing linking numbers in cyclic branched covers. Under the assumption that the linking numbers between the branch set and (projections of) the pseudo-branch curves are zero, formulas for the linking numbers are given in~\cite{przytycki2004linking}. Additionally, a 
formula for linking numbers in a rational homology 3-sphere described by a Kirby diagram 
is provided in~\cite{cha2002signatures}. 
A method for passing between a diagram of the branching set and a Dehn surgery presentation of its cyclic cover is given in~\cite{akbulut1980branched}; however, this gets gnarly as the degree of the cover or the Seifert genus of the branching set increases. In contrast, the input for our computation is a diagram of the branching set $K$ (in $S^3$) and the curves of interest in the complement of $K$. The computation is not sensitive to the Seifert genus of $K$ -- see Example~\ref{ex:gnarly-slice} in Section~\ref{sec:apps} -- and makes no assumption about the linking numbers between the branch and pseudo-branch curves in $S^3$. We have implemented our computation in \cite{cahn2023cyclic}, so the linking numbers can be calculated genuinely easily, even for high degree covers. 

We include an abridged version of our result below, suppressing a lot of the notation used in the linking number formula. The full statement is deferred to Section ~\ref{general.sec}. 
For ease of exposition, we first prove the theorem in Section~\ref{sec:thms}, under an additional assumption on the linking numbers, in $S^3$, of the pseudo-branch curves and the branch curve, as needed for our main application~\cite{lidman2022linking}. Note that we do not require that the cover be a rational homology sphere.

\begin{thm}[Main Theorem, Abridged] Let $K\subset S^3$ be a knot.  Denote by $\Sigma_q(K)$ the $q$-fold branched cover of $K$. We first determine whether each closed, connected component of the lifts of $\gamma$ and $\eta$ is rationally null-homologous in $\Sigma_q(K).$ When this is the case, the linking numbers between connected components of the lifts of $\gamma$ and $\eta$ in $\Sigma_q(K)$ can be computed by the formula in Equation~\ref{linking-formula-general}. The input needed for performing the computation is contained in the Dowker-Thistlethwaite code for a diagram of $K\cup\gamma\cup\eta$.    
\end{thm}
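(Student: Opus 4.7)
The plan is to realize $\text{lk}(\tilde\gamma,\tilde\eta)$ as a rational intersection number in $\Sigma_q(K)$ and then package the ingredients of that intersection into data readable off a diagram of $K\cup\gamma\cup\eta$. First, I would build a rational 2-chain $D\subset S^3$ with $\partial D$ a nonzero multiple of $\gamma$, meeting $K$ and $\eta$ transversely. When $\text{lk}(\gamma,K)=0$ (the simplifying assumption used for the main application in Section~\ref{sec:thms}), $D$ can be taken to be a Seifert surface for $\gamma$ inside $S^3\setminus K$; in general, $\partial D$ must be corrected by meridional caps of $K$, and that correction is the source of the complication suppressed from the abridged statement above.

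Next, I would cut $D$ along $D\cap K$ and lift it to $\Sigma_q(K)$, using the cyclic monodromy $\pi_1(S^3\setminus K)\to \mathbb{Z}/q$ to glue sheets through the branching. The deck action organizes the preimages of $\gamma$ and $\eta$ into orbits; a given connected component of the preimage of $\gamma$ (respectively $\eta$) is rationally null-homologous in $\Sigma_q(K)$ exactly when a certain homology class, computable from the lift of $D$ (respectively an analogous chain for $\eta$) together with $H_1(\Sigma_q(K))$, is torsion. This condition can be checked from a Seifert matrix for $K$ and the linking data of $\gamma,\eta$ with $K$, all of which are diagrammatic. Under the hypothesis, an integer multiple of the chosen component $\tilde\gamma$ bounds a connected piece $\tilde D$ of the lift of $D$, and $\text{lk}(\tilde\gamma,\tilde\eta)$ equals $\tilde D\cdot\tilde\eta$ divided by that normalizing multiple.

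The intersection $\tilde D\cdot\tilde\eta$ then decomposes into two local contributions. Away from $K$, each transverse point of $D\cap\eta$ in $S^3$ lifts to $q$ intersection points in $\Sigma_q(K)$, distributed among the sheets by the monodromy of an arc joining a basepoint on $\gamma$ to the intersection point; only those points landing on the chosen component $\tilde\eta$ contribute. Near $K$, the branching produces additional intersections along the arcs of $D\cap K$, quantified by the local winding of $\eta$ around those arcs and by the cyclic identification of sheets. Each ingredient — the Wirtinger-type monodromy along an arc, the signed count of $D\cap\eta$ at a crossing of $\gamma$ over $\eta$, and the branch correction where $\eta$ passes near $K$ — is purely local and records only crossing data, which is precisely what the Dowker–Thistlethwaite code encodes. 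Summing these local terms produces the formula of Equation~\ref{linking-formula-general}.

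The main obstacle will be the bookkeeping of sheet indices at the branching: I must choose coherent basepoints and reference arcs on $\gamma$, $\eta$, and $K$ so that (i) the decomposition of the lifts into components is consistent across the formula, (ii) the correction terms near $K$ aggregate into a contribution to the intersection number rather than a spurious boundary term, and (iii) the resulting expression is manifestly symmetric under swapping $\gamma$ and $\eta$. Once this local accounting is pinned down from the DT code, which linearly orders the crossings and records over/under information, verifying that the sum computes the topological linking number reduces to a routine transversality argument in the cover.
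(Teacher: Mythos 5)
Your route---lift a Seifert surface (or rational $2$-chain) $D$ for $\gamma$ from $S^3$ to the cover and intersect it with $\tilde\eta$---is genuinely different from the paper's, which never lifts a surface: it instead cones the diagram to get a cell structure on $S^3$, lifts the cells, writes an undetermined $2$-chain $\sum B_i^k+\sum x_i^jA_i^j$ as in Equation~\ref{eq:2-chain-generic}, and solves a linear system for the $x_i^j$ (Theorem~\ref{thm:mainA}). Unfortunately your version has a genuine gap at the step ``an integer multiple of the chosen component $\tilde\gamma$ bounds a connected piece $\tilde D$ of the lift of $D$.'' Cut $D$ along $D\cap K$ to get $D'\subset S^3\setminus K$ with boundary $\gamma$ plus meridians of $K$. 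A meridian has linking number $\pm1$ with $K$, so its preimage is connected and the preimage of $D'$ is (generically) a single connected surface whose boundary contains \emph{all} $q$ lifts of $\gamma$, together with curves on the branch locus. After capping, you obtain a $2$-chain bounding $\tilde\gamma^1+\cdots+\tilde\gamma^q$, which computes only the sum $\sum_j\mathrm{lk}(\tilde\gamma^j,\tilde\eta^k)$. Worse, this boundary is deck-invariant, so no rational combination of deck translates of $\tilde D$ isolates a single component $\tilde\gamma^j$: you need additional $2$-chains spanning the differences $\tilde\gamma^j-\tilde\gamma^{j'}$, and both producing them and deciding when they exist is exactly the rational-null-homology question the theorem is supposed to answer. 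This is precisely the work done by the coefficients $x_i^j$ of the lifted cone cells $A_i^j$ in the paper's linear system; your sketch assumes that content rather than supplying it.

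Two smaller points. The simplifying hypothesis of Section~\ref{sec:thms} is that $q$ divides $\mathrm{lk}(K,\eta)$ and $\mathrm{lk}(K,\gamma)$, not that these linking numbers vanish (in the satellite application $\mathrm{lk}(K,\eta)$ is the winding number $w$, a possibly nonzero multiple of $q$); under $\mathrm{lk}(\gamma,K)=0$ your Seifert-surface setup essentially reproduces the already-known approach of Przytycki--Yasuhara cited in the introduction, whereas the point of the paper is to drop that assumption. Also, requirement (iii) in your list---that the formula be manifestly symmetric in $\gamma$ and $\eta$---is unnecessary: symmetry of the rational linking number is quoted from the literature, so one may compute a bounding chain for $\eta$ alone and intersect $\gamma$ with it, as the paper does.
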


The notation needed to properly state (or apply) our theorem is introduced in Section~\ref{sec:cellular}, where we also describe the geometric construction used in the proof. 
Step by step instructions for how to use the computational tool provided in \cite{cahn2023cyclic} are given in the appendix.

\section{Cell structure on the cyclic branched cover of a knot}\label{sec:cellular}
We endow $S^3$ with a cell structure determined by the link $K\cup\eta$. Recall that $K$ denotes the branching set and $\eta\subset S^3\backslash K$ is a pseudo-branch curve. We then lift this cell structure to the branched cover $\Sigma_q(K)$. In this section, we describe this structure and introduce notation for the cells. Analogous cellular decompositions of the 3-sphere and its irregular dihedral covers are used in~\cite{perko1964invariant, kjuchukova2018dihedral, cahn2018computing, cahn2021linking}.

\begin{figure}
    \centering
    \includegraphics[width=\textwidth]{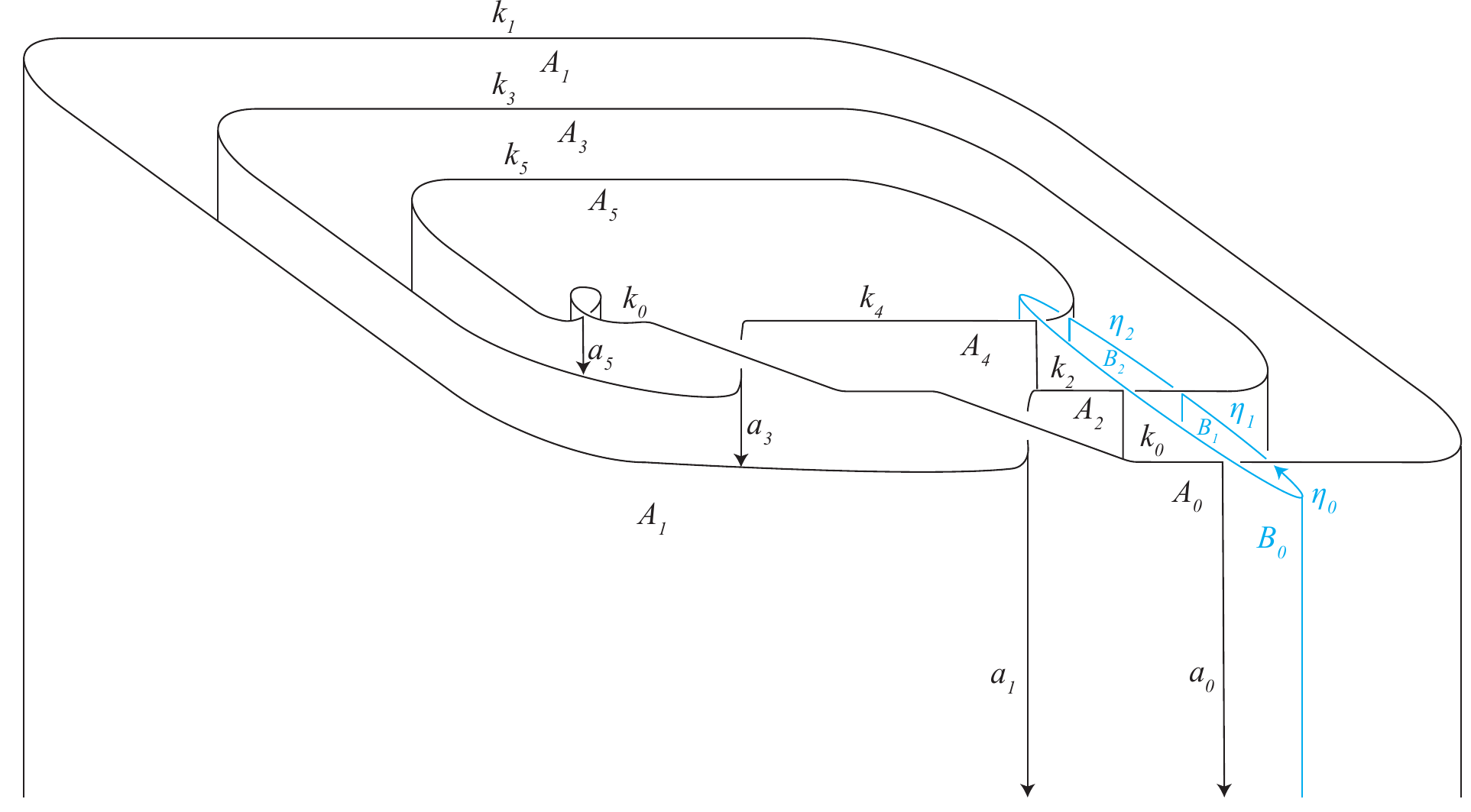}
    \caption{A cell structure on $S^3$ determined by the $(3,1)$-cabling pattern, represented by a 2-component link $K\cup \eta$.}
    \label{fig:31cablecells}
\end{figure}

Throughout this section, the reader should refer to Figure ~\ref{fig:31cablecells}.  

Let $D$ be a diagram of $K\cup\eta$, regarded as a projection with double points. Consider  $c(D)$, the cone on the diagram, illustrated in Figure ~\ref{fig:31cablecells} and described in terms of a cell structure in the following paragraphs. We describe a cell structure on $S^3$ in terms of $D$. The 0-cells are the cone point $c$ and the double points $c_j$ in $D$. (Each $c_j$ is regarded as belonging to the understrand at this crossing.) Adopting the language used by Perko, we categorize the 1-cells as either {\it horizontal}, corresponding to strands in $D$, and {\it vertical}, which are the cones on the $c_j$. The 2-cells, called ``walls", are the cones on the horizontal 1-cells, and they are indexed by the strands $d_i$ of $D$. The boundary of each 2-cell $C^2$ consists of: a single horizontal 1-cell $d_i$ (where $C^2_i=c(d_i)$); two vertical 1-cells, the cones on the endpoints of $d_i$; additional pairs of vertical 1-cells, called ``slits", corresponding to crossings where $d_i$ is the overstrand\footnote{The ``slits" are part of the attaching map of the 2-cell. They come in canceling pairs so do not contribute to the 1-chain $\partial C^2_i$. However, the presence of slits does sometimes contribute to the 1-chains bounded by lifts of $C^2_i$.}.  The 2-skeleton of the cell structure on $S^3$ is illustrated in Figure~\ref{fig:31cablecells}, with the cone point regarded as a point at infinity. There is a single 3-cell, the complement of the cone. 

We proceed to name the strands of $D$, the 1-cells and the 2-cells. We orient $K$ and fix a 0-cell which we label $c_0$. The remaining 0-cells on $K$ get consecutive indices, proceeding in the direction of the orientation. The horizontal 1-cells which are strands of $K$ are then labeled $k_0, k_1, \dots, k_{n-1}$, with $\partial(k_i)=c_{i+1}-c_i$. A remark on the number of strands: in order to simplify labeling the lifts of two-cells later, we henceforth assume that the writhe of $K$ is 0~mod~$q$, after performing some Reidemeister-I moves if necessary. We also orient $\eta$ and number the horizontal 1-cells which are strands of $\eta$ in the analogous manner: $\eta_0, \eta_1, \dots, \eta_{m-1}$. If $c_i$ is a point on $K$, the vertical 1-cell whose boundary is $c-c_i$ is labeled $a_i$.  If $c_i$ is a point on $\eta$, the vertical 1-cell whose boundary is $c-c_i$ is labeled $b_i$.  The 2-cell whose horizontal boundary is $k_i$ will be denoted by $A_i$. The 2-cell whose horizontal boundary is $\eta_i$ will be denoted by $B_i$. We orient each 2-cell so that $\partial(A_i)= k_i + a_{i+1}-a_i$ and $\partial(B_i)= \eta_i + b_{i+1}-b_i$. There's a single 3-cell (whose interior is the complement of the cone) denoted $E$. 

We now describe the induced cell structure on $\Sigma_q(K)$. Each strand $k_i$ has a single lift, which we also denote by $k_i$. Every 0-, 1-, 2- and 3-cell whose interior is contained in $S^3\backslash K$ has $q$ lifts, labeled in the natural way: the lifts of $a_i$ are $a_i^1,\dots a_i^q$; of $A_i$ are $A_i^1,\dots A_i^q$; analogously for the $\eta_i$ and $B_i$; the 3-cells in $\Sigma_q(K)$ are $E^1, \dots, E^q$. Some of the notation can be chosen freely, and the rest is determined by the given diagram of $K\cup\eta.$ We assign superscripts in such a way that $\partial(A_i^j)= k_i^j + a_{i+1}^j-a_i^j +\dots$ and $\partial(B_i^j)= \eta_i^j +\dots$.  (It is to be understood that these 1-cells do not appear again in the rest of the chain, that is, the coefficient of $k_i^j$ in $\partial(A_i^j)$ is precisely 1, etc..)  Remark also that the 1-cell $k_i$ belongs to the boundary of $A_i^j$ for all $j$. For a fixed $i$, we let the superscripts of the cells $A_i^j$ increase in the clockwise direction (if the orientation of the knot points into the page). Put differently, if an Ambler\footnote{A convenient visualization device introduced, under another name, by Perko~\cite{perko1964invariant}. } walking around  $\Sigma_q(K)$ stands on $A_i^j$, with the cell $a_i^j$ to his left, and faces in the direction of the orientation of $K$, then the 2-cell to his left is $A_i^{j-1}$.  See Figure~\ref{fig:roladex}. Now let the Ambler continue along a push-off of $K$ until he crosses a wall (that is, a 2-cell whose horizontal boundary is a lift of the overstrand at crossing $c_i$). After crossing the wall, the strand to the left of the Ambler is $k_{i+1}$ and the 2-cell he's walking on is an $A_{i+1}^{s}$ for some value of $s$. We pick our convention so that $s=j$, that is, the superscripts remain unchanged when passing through a wall. See Figure ~\ref{fig:knot_over_knot}.  However, each time a wall $A_i^j$ is crossed, the ambient 3-cell changes, since the meridian of the horizontal boundary of $A_i$ permutes the 3-cells. (Passing through a wall $B_i^j$, whose horizontal boundary is a lift of $\eta$, does not change the ambient 3-cell.) 
 
 In summary, each 2-cell in $S^3$ labeled $A_i$ has $q$ lifts, $A_i^1,\dots A_i^q$, to $\Sigma_q(K)$. These 2-cells all share a boundary arc, $k_i$. For a fixed $i$, the superscripts of the cells $A_i^j$ increase when going along a positively oriented meridian of the arc $k_i$, see Figure~\ref{fig:roladex}.  Furthermore, standing on  $A_i^j$ and moving along a push-off of $k_i$, after passing through a 2-cell, the Ambler is on $A_{i+1}^j$. Some labels of 2-cells are illustrated in Figure~\ref{fig:knot_over_knot}.
 
 Next, we number the 3-cells of $\Sigma_q(K)$. Suppose the Ambler stands on a push-off of $K$ into $A_0^j$, with the 1-cell $K_0$ to his left and facing in the direction of the orientation of $K$. We label the 3-cell his head is by $E^j$. In addition, we assume that the subscript denoting the 3-cell increases clockwise when passing through a 2-cell which bounds $K_0$, see Figure~\ref{fig:roladex}. This choice, made around the 1-cell $K_0$, in fact determines the positions of all 3-cells with respect to the entire 2-skeleton. As the Ambler moves along the push-off of $K$, he will pass through walls, or vertical 2-cells, whenever a strand $K_r$ turns into a strand $K_{r+1}$, that is, at all lifts of crossings where an arc of $K$ is the understrand.  Passing through a wall at a lift of a positive crossing increases the superscript on the 3-cell by 1 (since a positively oriented meridian of the branching set acts on the 3-cells by the $q$-cycle $(1 2 3\dots q)$). Similarly, passing through a wall at a lift of a negative crossing decreases the superscript of the 3-cell by 1. We introduce a function, $\omega_K(i)(j)$, to keep track of the positions of 3-cells: when the Ambler has the 1-cell $k_i$ to his left, and he is standing on the 2-cell $A_i^j$, his head will be in the 3-cell $E^{\omega_K(i)(j)}$. 
  Lastly, recall that we have arranged that the writhe of the knot is 0~mod~$q$. Therefore, if the Ambler starts a walk on a right push-off of $k_0$, standing on $A_0^j$, after completing a full circle along a push-off of $K$, he has his head back in $E^j$, the 3-cell he started in. 
 \begin{figure}
     \centering
     \includegraphics[width=5.5in]{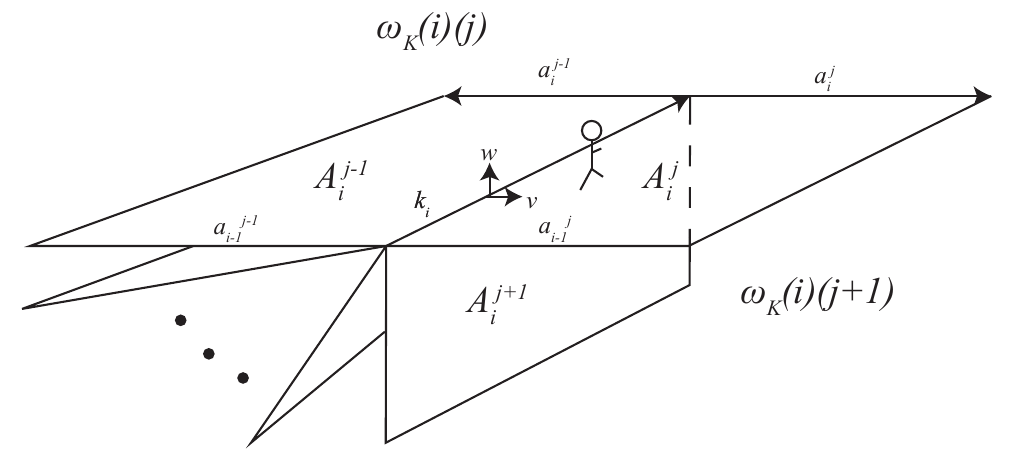}
     \caption{Roladex of lifts of the cell $A_i$. The Ambler is on $A_i^j$ with $k_i$ to his left. His head is in the 3-cell $E^{\omega_K(i)(j)}$. Throughout, we assume that the Ambler is oriented positively with respect to $A_i^j$. That is, thee vectors $v$, pointing to the Ambler's right, and $w$, pointing from the Ambler's feet to head, together with the tangent vector to $k_i$, form a positive frame.  }
     \label{fig:roladex}
 \end{figure}

 \begin{figure}
     \centering
     \includegraphics[width=5.5 in]{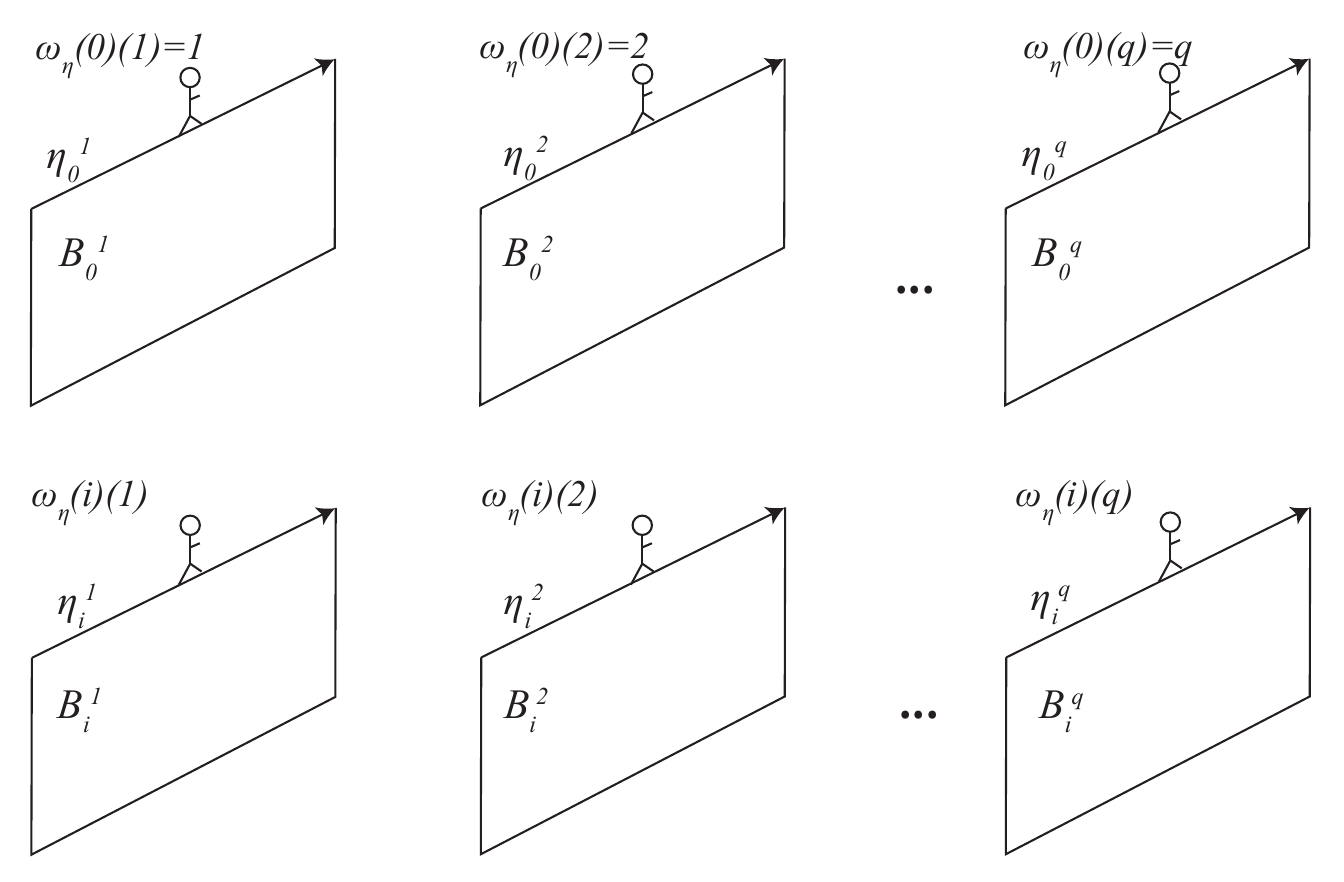}
     \caption{If the Ambler stands on the lift $\eta_i^j$ of $\eta_i$ above the 2-cell $B_i^j$, his head is in the 3-cell $E^{\omega_\eta(i)(j)}$.  Since there are $q$ lifts of each arc $\eta_i$, $\omega_\eta(i)$ is a permutation in the symmetric group $S_q$.  We assume that $\omega_\eta(0)$ is the identity permutation. }
     \label{fig:omega_eta}
 \end{figure}

It remains to label the lifts of the arcs $\eta_i$ and of the 2-cells $B_i$. First observe that the pre-image of $\eta$ in $\Sigma_q(K)$ has $q$ path-lifts, which we label $\eta^1,\dots,\eta^q$.  These path-lifts fit together to form closed, connected components. Recall that we refer to each such component as a pseudo-branch curve.

We next remark that $\forall i,j$ each component $\eta^i$ contains a lift of the arc $\eta_j$, denoted $\eta^i_j$. This 1-cell bounds a 2-cell $B_j^i$, one of the lifts of $B_j$. We would like to have a systematic way of enumerating the components $\eta^1,\dots,\eta^q$. We choose the convention that the arc $\eta^r_0$ is incident to the 3-cell $E^r$. That is, if the Ambler stands on $B^r_0$ with $\eta^r_0$ on his left, his head is in $E^r$. The component $\eta^r$ is by definition the one containing the arc $\eta^r_0$.

The functions $\omega_\eta(i)(j)$ and $\omega_\gamma(i)(j)$ are defined similarly to $\omega_K(i)(j)$ above. In other words, assume the Ambler is standing on the arc $\eta_i^j$, above the 2-cell $B_i^j$. Then, his head is in $\omega_\eta(i)(j)$.  See Figure ~\ref{fig:omega_eta}.

We make one additional remark about the positions of 2-cell $B_i$. Recall we chose the convention that if the Ambler stands on $\eta_0^k$ above $B_0^k$, his head is in $E^k$. Additionally, we assume $B_i^k$ shares a (vertical) boundary 1-cell with $B_{i+1}^k$. In other words, if the Ambler is walking to the right of $\eta$ along a push-off of $\eta$ in the direction determined by the orientation of $\eta$, when he reaches a vertical wall, he crosses from $B_{i}^k$ into $B_{i+1}^k$. 

\begin{figure}
    \centering
    \includegraphics[width=6in]{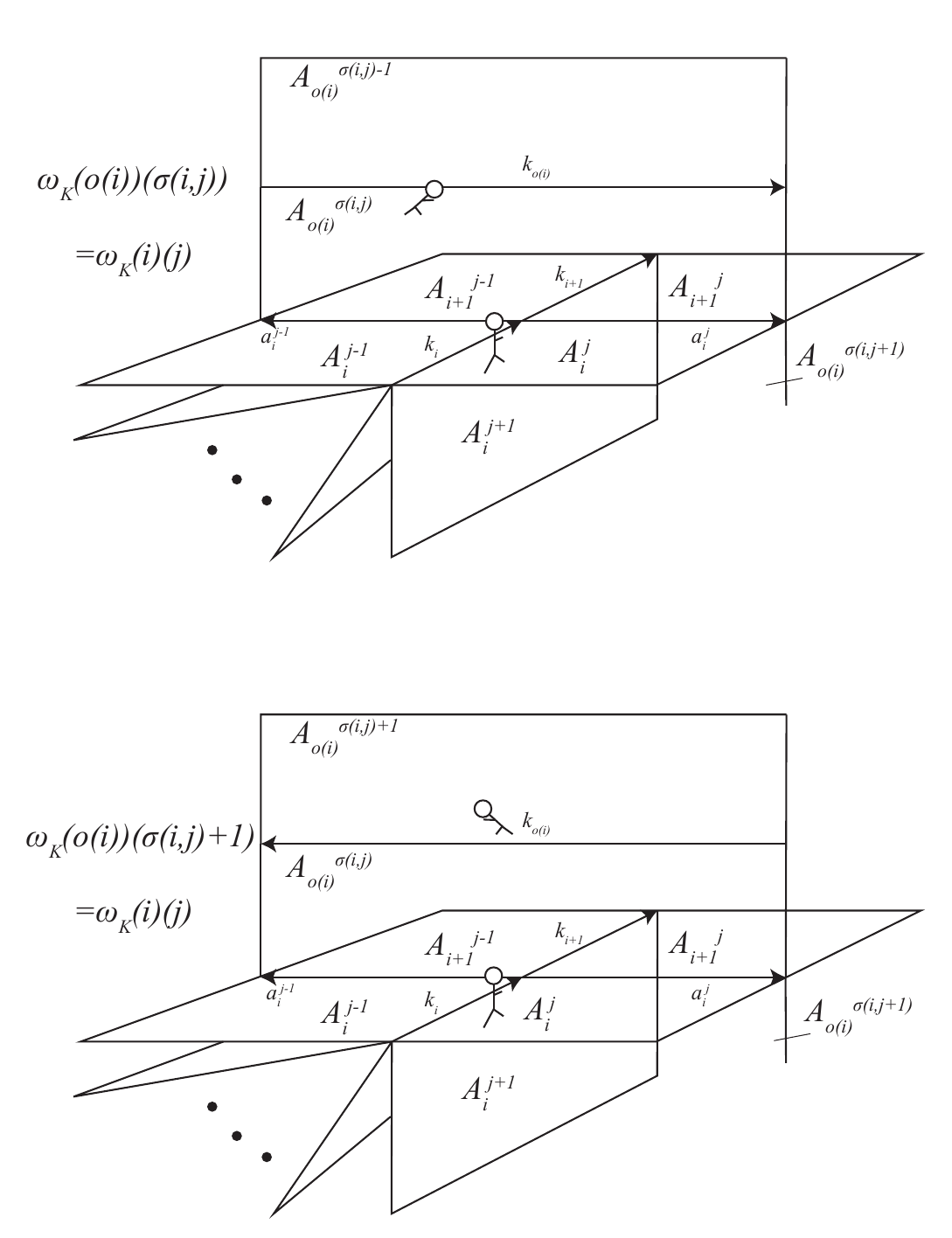}
    \caption{Lift of the cell structure at a positive (top) and negative (bottom) crossing of the branch curve $K$ over the end of arc $k_i$, with Ambler. }
    \label{fig:knot_over_knot}
\end{figure}

\begin{figure}
    \centering
    \includegraphics[width=6in]{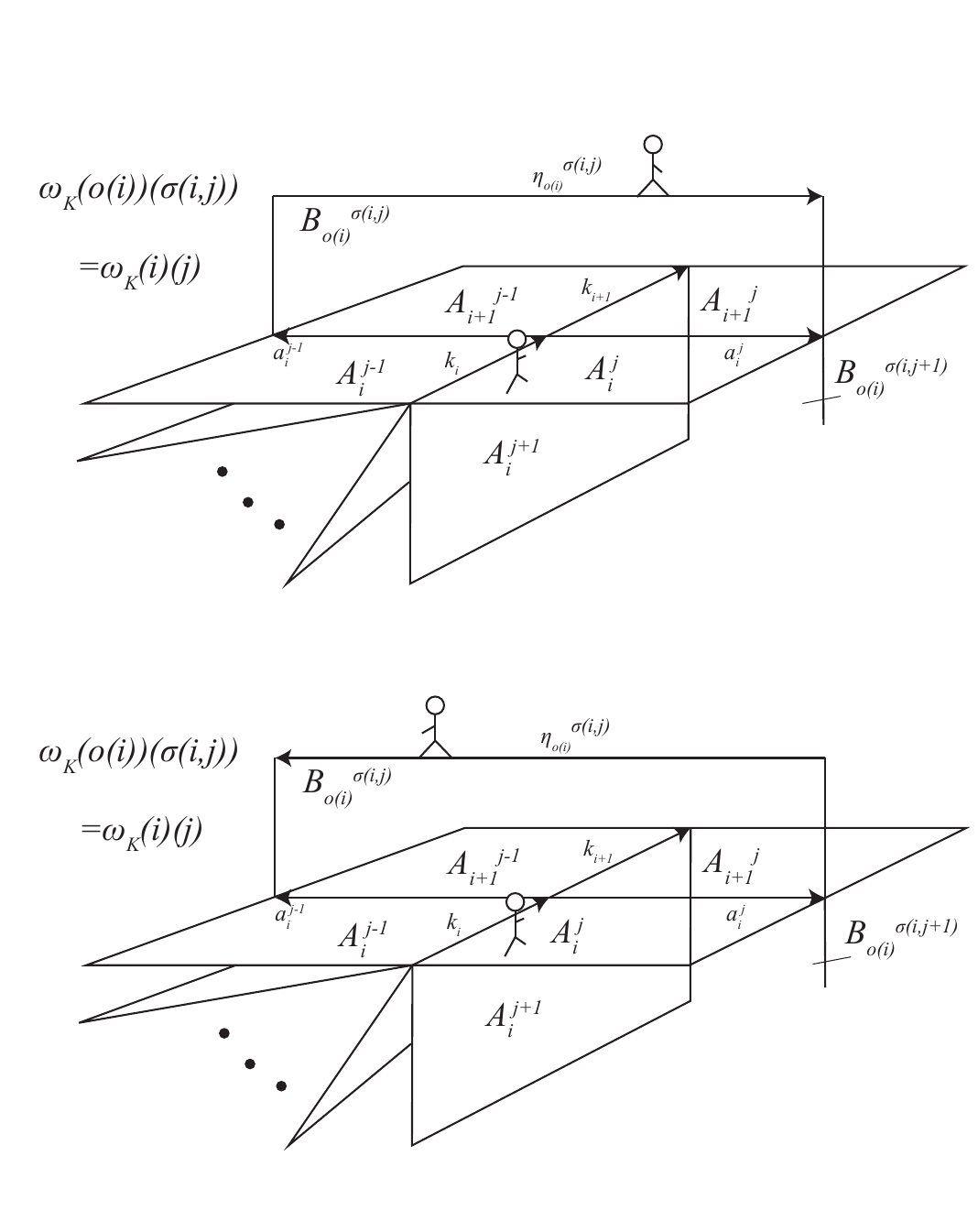}
    \caption{Lift of the cell structure at a positive (top) and negative (bottom) crossing of the pseudo-branch curve $\eta$ over the end of arc $k_i$, with Ambler. }
    \label{fig:pb_over_knot}
\end{figure}
Now consider a crossing $c_i$ in the given diagram of $K\cup\eta$, where the understrand $k_i$ and the overstrand $k_{o(i)}$ are both arcs of the branching set $K$. The crossing has $q$ lifts to $\Sigma_q(K)$ and the vertical 2-cells $A_{o(i)}^j$ are distributed across them. We introduce a function, $\sigma(i,j)$, to denote which lift of $A_{o(i)}$ lies over a given lift of $A_i$. Here, the subscript $o(i)$ denotes the overstrand at crossing $i$ of the diagram. The superscript $\sigma(i,j)$ specifies which lift of the downstairs 2-cell $A_{o(i)}$ the Ambler will cross when passing from $A_i^j$ to $A_{i+1}^j$ in $\Sigma_q(K)$. Put yet another way, the 3-cells $E^{\omega_K(i)(j)}$ and $E^{\omega_K(i+1)(j)}$ share the 2-cell $A_{o(i)}^{\sigma(i,j)}$ as part of their boundary. 

The case where the over-strand at the head of $k_i$ is an arc of $\eta$ is treated similarly. If the overstrand is  $\eta_{o(i)}$, the relevant vertical wall in $S^3$ is  $B_{o(i)}$ and its lifts $B_{o(i)}^{\sigma(i,j)}$ are labeled in the analogous fashion. That is, if the Ambler is standing on $A_i^j$ with the knot on his left and walking in the direction of the orientation of the knot, the vertical wall he will cross when he reaches a lift of crossing $i$ is $B_{o(i)}^{\sigma(i,j)}$. 

We define a function $\sigma_{\gamma}(i,j)$ in the analogous fashion, to keep track of the superscripts of the vertical 2-cells the Ambler will cross when traveling along a lift of $\gamma$.   Note that we do not define a function $\sigma_\eta(i, j)$ since, in computations, we never travel along $\eta$ unless $\gamma=\eta,$ as is the case in many applications. As a general principle, we do not name cells which do not come up in the computation. If interested in a 2-chain bounded by $\gamma,$ and 2-cells intersected when traveling along a lift of $\eta$, we simply swap the roles of $\eta$ and $\gamma$ in the labeled diagram. 

The values of the functions $\sigma(i,j)$ and  $\sigma_\gamma(i,j)$ are determined by the following formulas.

\begin{defn} The function $\sigma(i,j)$ is defined by:
\[\sigma(i,j)=\begin{cases}
    \omega_K(o(i))^{-1}[\omega_K(i)(j)]&\text{if  }\epsilon(i)=1 \text{ and }k_i \text{ terminates at the arc }k_{o(i)}\\
    \omega_K(o(i))^{-1}[\omega_K(i)(j)]-1&\text{if  }\epsilon(i)=-1\text{ and }k_i \text{ terminates at the arc }k_{o(i)}\\
    \omega_\eta(o(i))^{-1}[\omega_K(i)(j)]&\text{if }k_i \text{ terminates at the arc } \eta_{o(i)}\\
\end{cases}\]
The output of $\sigma(i,j)$ is taken mod $q$ with values between 1 and $q$.
\end{defn}

\begin{defn}The function $\sigma_\gamma(i,j)$ is defined by:
    \[
    \sigma_\gamma(i,j)=\begin{cases}
        \omega_K(o_\gamma(i))^{-1}[\omega_\gamma(i)(j)]&\text{if }\epsilon_\gamma(i)=1\text{ and }\gamma_i\text{ terminates at the arc }k_{o_\gamma(i)}\\
        \omega_K(o_\gamma(i))^{-1}[\omega_\gamma(i)(j)]-1&\text{if }\epsilon_\gamma(i)=-1\text{ and }\gamma_i\text{ terminates at the arc }k_{o_\gamma(i)}\\
        \omega_\eta(o(i))^{-1}[\omega_\gamma(i)(j)]&\text{if }\gamma_i \text{ terminates at the arc } \eta_{o_\gamma(i)}\\
    \end{cases}
    \]
    The output of $\sigma_\gamma(i,j)$ is taken mod $q$ with values between 1 and $q$. 
\end{defn}

The notation is summarized in Table~\ref{notation.tab}.

\begin{table}[htbp] 
\begin{center}
	\begin{tabular}{|l|l|l|}
		\hline

 $K$  & branching set \\
 \hline
 $q$   & degree of branched cover (in applications, $q=p^s$ for some prime $p$)  \\
\hline
 $\Sigma_q(K)$  & $q$-fold cover of $S^3$ branched along $K$ \\
 \hline
  $\eta, \gamma$  & simple closed curves in $S^3\backslash K$ (co-called pseudo-branch curves)\\
 \hline
  $n$  & number of strands of $K$ in diagram of $K\cup \eta$ \\
 \hline
 $k_i$  & strand of $K$ and horizontal 1-cell in either $S^3$ or $\Sigma_q(K)$ \\ 
 \hline
 $\eta_i$  & strand of $\eta$ and horizontal 1-cell in either $S^3$ or $\Sigma_q(K)$ \\ 
 \hline
 $a_i$  & vertical 1-cell in $S^3$, at crossing $i$ \\
 \hline
  $o(i)$  & subscript of the overstrand at crossing $i$ \\
 \hline
  $\epsilon(i)\in \{\pm 1\}$  & sign of crossing at the head of the arc $k_i$ \\
 \hline
  $\epsilon_\gamma(i)\in \{\pm 1\}$  & sign of the crossing at the head of the arc $\gamma_i$\\
 \hline
 $A_i$  & 2-cell in $S^3$ with $\partial(A_i)=k_i + a_{i+1}-a_i$ \\
 \hline
 $A_i^1,\dots A_i^q$ & lifts of $A_i$ to $\Sigma_q(K)$, with indices increasing clockwise around $K_i$\\
   \hline
    $\eta^1, \eta^2, \dots \eta^q$& closed components of the lift of $\eta$ to $\Sigma_q(K)$ \\
    \hline
    $\eta_i^j$& lift of the arc $\eta_i$ contained in the component $\eta^j$\\
 \hline
 $m$  & number of strands of $\eta$ in diagram of $K\cup \eta$ \\
 \hline
$B_i$  & 2-cell whose horizontal boundary is $\eta_i$ \\
 \hline
  $B_i^1,\dots B_i^q$ & lifts of $B_i$ to $\Sigma_q(K)$, with $B_i^k$ having $\eta_i^k$ on its boundary\\
   \hline
 $\omega_K(i)$& permutation associated to arc $K_i$ \\
  \hline
  $\omega_K(i)(x)=y$& when the Ambler stands on $A_i^x$ with $k_i$ on his left, his head is in $E^y$ \\
 \hline
 $\omega_\eta(i)(j)$ & 3-cell containing the Ambler's head when he stands on the lift $B_i^j$ of $B_i$  \\
 \hline
 $\omega_\gamma(i)$ & permutation of the 3-cells associated to arc $\gamma_i$, analogous to $\omega_\eta(i)$ \\
 \hline
$\sigma(i,j)$ & superscript on the vertical wall ($A_{o(i)}^{\sigma(i,j)}$ or $B_{o(i)}^{\sigma(i,j)}$) separating  $A_i^j$ and $A_{i+1}^j$\\
 \hline
$\sigma_\gamma(i,j)$ & superscript on the vertical wall ($A_{o(i)}^{\sigma_\gamma(i,j)}$ or $B_{o(i)}^{\sigma_\gamma(i,j)}$) 
separating $\gamma_i^j$ and $\gamma_{i+1}^j$\\

   \hline
    $\Sigma^k$& rational two-chain with boundary $\eta^k$ \\
    \hline
  $x_i^j$& coefficient of $A_i^j$ in a given 2-chain $\Sigma^k$ \\

  \hline

	\end{tabular}
	\vskip .1in
	\caption{Notation}\label{notation.tab}
\end{center} 
\end{table}

\section{Main Theorem, Specialized Version}\label{sec:thms}

To simplify the exposition, we first tailor the theorem to our primary application. That is, we begin by assuming that each of $\eta$ and $\gamma$ have $q$ closed lifts, $\eta^1,\dots, \eta^q$ and $\gamma^1,\dots,\gamma^q$.   This happens precisely when $q$ divides the linking number of both $\eta$ and $\gamma$ with $K$. We give the general linking number formula, with no assumption on the linking numbers in $S^3$, in Section~\ref{general.sec}.

Given a link $K\cup\eta\cup\gamma$ as above, for each lift $\eta^k$ of $\eta$  and $\gamma^j$ of $\gamma$ to $\Sigma_q(K)$, we check if there exists a rational two-chain with boundary $\eta^k$ and $\gamma^j$, respectively. If such chains exist, we find an explicit formula for the chain bounding $\eta^k$ and use it to compute the linking number of $\eta^k$ with $\gamma^j$.

A priori, a (rational) 2-chain bounding $\eta^k$, the $k^{\text{th}}$ lift of $\eta$, if it exists, takes the form
$$\Sigma^k= \sum_{i=0}^{m-1} z_i^kB_i^k +\sum_{i=0}^{n-1} x_i^j A_i^j.$$
The 1-cell $\eta_i^k$ appears as a boundary of only one 2-cell, $B_i^k$. This implies that the coefficient of $\eta_i^k$ in $\partial \Sigma^k$ is $z_i^k$. Since $\partial \Sigma^k=\eta^k=\Sigma_{t=0}^{m-1} \eta_t^k$, we have $z_i^k=1$.  Thus, we can rewrite the above chain as:
\begin{equation}\label{eq:2-chain-generic}
\Sigma^k= \sum_{i=0}^{m-1} B_i^k +\sum_{i=0}^{n-1} x_i^j A_i^j.    
\end{equation}

Additionally, for any arc $k_i$ of $K$, the coefficient of $k_i$ in $\partial \Sigma^k$ is $\sum_{j=1}^q x_i^j$. Since $\partial \Sigma^k=\eta^k$, we conclude that $\sum_{j=1}^q x_i^j=0$ for each $i$.

The remaining constraints, a system of linear equations in the $x_i^j$, are summarized in Theorem~\ref{thm:mainA}.

\begin{thm} \label{thm:mainA}
Let  $K\cup\eta$ be a link in $S^3$. Let $q$ be an integer dividing $lk(K, \eta)$. We use the notation of Section~\ref{sec:cellular}. The lift $\eta^k$ of $\eta\in S^3(K)$ to $\Sigma_q(K)$, if rationally nullhomologous, bounds the 2-chain $\Sigma^k$ in Equation~\ref{eq:2-chain-generic}, where the $x_i^j$ satisfy $\sum_{j=1}^q x_i^j=0$ for each $i$, and are simultaneously a solution to the following system:
\[\begin{cases}
x_i^j-x_{i+1}^j-\epsilon(i)(x_{o(i)}^{\sigma(i,j)}-x_{o(i)}^{\sigma(i,j+1)})=0&\text{if }k_i\text{ terminates at the arc } k_{o(i)}\\
x_i^j-x_{i+1}^j=\epsilon(i) &\text{if } k_i
 \text{ terminates at the arc } \eta_{o(i)} \text{ and }\sigma(i,j)=k\\
 x_i^j-x_{i+1}^j=-\epsilon(i) &\text{if } k_i
 \text{ terminates at the arc } \eta_{o(i)} \text{ and }\sigma(i,j+1)=k\\
x_i^j-x_{i+1}^j=0&\text{if } k_i \text{ terminates at the arc } \eta_{o(i)} \text{ and } \sigma(i,j),\sigma(i,j+1)\neq k,\\
 \end{cases}\]
\noindent with superscripts taken mod $q$ between $1$ and $q$, and subscripts taken mod $n$ between $0$ and $n-1$.  If the system of equations above has no solution over $\mathbb{Q}$,  $\eta^k$ is non-zero in $H_1(\Sigma_q; \mathbb{Q}).$ 
\end{thm}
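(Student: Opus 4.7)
The plan is a direct cellular boundary computation: expand $\partial\Sigma^k$ using the cell structure of Section~\ref{sec:cellular}, equate it to $\eta^k=\sum_{i=0}^{m-1}\eta_i^k$, and collect coefficients to read off the constraints on the $x_i^j$. I would first record that the boundary of a lift $A_i^j$ takes the form
\[
\partial A_i^j \,=\, k_i + a_{i+1}^j - a_i^j + (\text{slit terms at crossings where } k_i \text{ is the overstrand}),
\]
with an analogous formula for $B_i^k$ (replacing $k_i, a_{i+1}^j, a_i^j$ by $\eta_i^k, b_{i+1}^k, b_i^k$). The key geometric input is how slits lift: at a crossing whose understrand is an arc of $K$, the two slit edges of the overstrand's wall project to the same vertical 1-cell downstairs but lie on sheets whose superscripts differ by the sign of the crossing (the monodromy of a small meridian of the branch set). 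At a crossing whose understrand is an arc of $\eta$, the corresponding meridian has trivial monodromy, so the two slit edges coincide and cancel in the boundary.

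Next I would collect coefficients in $\partial\Sigma^k$ cell by cell. Because each horizontal 1-cell $\eta_i^j$ appears on the boundary of no cell other than $B_i^j$, the coefficient of $\eta_i^j$ in $\partial\Sigma^k$ equals the coefficient of $B_i^j$ in $\Sigma^k$, namely $\delta_{jk}$; this already matches $\eta^k$. The coefficient of $k_i$ is $\sum_{j=1}^{q} x_i^j$, whose vanishing is the first stated condition. For a vertical 1-cell $a_{i+1}^j$ sitting over a crossing where $k_i$ terminates, the contributions are $+x_i^j$ from $A_i^j$, $-x_{i+1}^j$ from $A_{i+1}^j$, and slit contributions from lifts of the overstrand's wall. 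Using the definition of $\sigma(i,j)$ as the superscript of the wall separating $A_i^j$ from $A_{i+1}^j$, I would verify that the two wall-lifts whose slit edges are incident to $a_{i+1}^j$ are indexed precisely by $\sigma(i,j)$ and $\sigma(i,j+1)$, contributing with opposite signs weighted by $\epsilon(i)$. When the overstrand is $k_{o(i)}$, this yields the first line of the system; when it is $\eta_{o(i)}$, only $B_{o(i)}^k$ has nonzero coefficient in $\Sigma^k$, producing the remaining three cases according as $\sigma(i,j)=k$, $\sigma(i,j+1)=k$, or neither. At a vertical 1-cell $b_{\ell+1}^j$ above a crossing on $\eta$, the contributions from $B_\ell^k$ and $B_{\ell+1}^k$ cancel and the overstrand's slit contributions vanish by triviality of monodromy around $\eta$, so no further constraint arises.

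For the last assertion of the theorem, observe that any rational 2-chain $\Sigma$ with $\partial\Sigma=\eta^k$ already has the form of Equation~\ref{eq:2-chain-generic}: the coefficient of each $B_i^j$ in $\Sigma$ is forced by the coefficient of $\eta_i^j$ in $\eta^k$ to equal $\delta_{jk}$. Thus the rational solvability of the displayed system is equivalent to $\eta^k$ being rationally null-homologous, which is the contrapositive of the final sentence. The main obstacle, and the substantive content of the proof, lies in the local bookkeeping in the middle paragraph: at each of the four crossing configurations (combinations of crossing sign and overstrand type), one must verify that the two wall-lifts incident to $a_{i+1}^j$ are precisely those indexed by $\sigma(i,j)$ and $\sigma(i,j+1)$, with the signs as stated. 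This is a local geometric verification governed by the Ambler conventions of Figures~\ref{fig:roladex}, \ref{fig:knot_over_knot}, and \ref{fig:pb_over_knot}, and it is the step where care with orientations is essential.
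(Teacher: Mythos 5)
Your proposal is correct and follows essentially the same route as the paper's proof: expand $\partial\Sigma^k$ cell by cell, use the coefficients of $\eta_i^j$ and $k_i$ to force the $B$-coefficients and the condition $\sum_j x_i^j=0$, and then read off the displayed system from the vertical 1-cells at crossings where an arc of $K$ terminates, splitting into the case where the overstrand is an arc of $K$ versus an arc of $\eta$ (with only $B_{o(i)}^k$ carrying a nonzero coefficient in the latter). Your explicit slit/monodromy discussion and the check that the $b$-cells impose no further constraints simply make precise what the paper reads off from Figures~\ref{fig:knot_over_knot} and~\ref{fig:pb_over_knot}.
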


\begin{proof}  
Suppose $\partial \Sigma^k=\eta^k$, with
$$\Sigma^k= \sum_{i=0}^{m-1} B_i^k +\sum_{i=0}^{n-1} x_i^j A_i^j.$$  

Let $i$ be the crossing where the arc $k_i$ terminates. We compute the contribution to $\partial \Sigma^k$ by cells in the lift of a neighborhood of the crossing $i$ to $M$, which we denote by $\partial|_i \Sigma^k$.  (We will consider 4 different types of crossings, as in the theorem statement; the first case corresponds to the first crossing type, and the second case covers the other three crossing types.)

{\it Case 1:} At crossing $i$, $k_i$ terminates at the arc $k_{o(i)}$. The cell structure in the lift of a neighborhood of a crossing $i$ of this type is shown in Figure ~\ref{fig:knot_over_knot}.  

 In this case, $\partial|_i\Sigma^k$ takes the form

$$\partial|_i\Sigma^k=\left(\sum_{j=1}^q x_i^j\right) k_i+\left(\sum_{j=1}^q x_{i+1}^j\right)k_{i+1}+\left(\sum_{j=1}^q x_{o(i)}^j\right)k_{o(i)}+\sum_{i=1}^q  y_i^j a_i^j,$$

where $y_i^j$ is a linear combination of the $x_r^s$, determined by the configuration of cells at the given crossing.  As previously established (see paragraph preceding the theorem statement), $\sum_{j=1}^q x_i^j=0$,  $\sum_{j=1}^q x_{i+1}^j=0$, and  $\sum_{j=1}^q x_{o(i)}^j=0$.
In addition, each linear combination $y_i^j$ of the $x_r^s$'s is equal to 0, since $y_i^j$ is the coefficient of $a_i^j$ in $\partial \Sigma^k=\eta^k$.

We now express $y_i^j$ in terms of the $x_r^s$.  Refer again to Figure ~\ref{fig:knot_over_knot}, where both cases, $\epsilon(i)=1$ and $\epsilon(i)=-1$, are depicted.

First suppose $\epsilon(i)=1$.  In this case, $a_i^j$ is incident to $A_i^j$, $A_{i+1}^j$, $A_{o(i)}^{\sigma(i,j)}$, and $A_{o(i)}^{\sigma(i,j+1)}$, and appears in the boundaries of these 2-cells with coefficients $x_i^j$, $-x_{i+1}^j$, $-x_{o(i)}^{\sigma(i,j)}$, and $x_{o(i)}^{\sigma(i,j+1)}$.  Therefore, $y_i^j=x_i^j-x_{i+1}^j-x_{o(i)}^{\sigma(i,j)}+x_{o(i)}^{\sigma(i,j+1)}$.  

The case $\epsilon(i)=-1$ is analogous, and we conclude that $y_i^j=x_i^j-x_{i+1}^j+x_{o(i)}^{\sigma(i,j)}-x_{o(i)}^{\sigma(i,j+1)}$.  

Setting $y_i^j=0$ in both cases above results in the equation $$x_i^j-x_{i+1}^j-\epsilon(i)(x_{o(i)}^{\sigma(i,j)}-x_{o(i)}^{\sigma(i,j+1)})=0,$$

which is the first equation in the theorem statement, as desired.

{\it Case 2:} At crossing $i$, $k_i$ terminates at the arc $\eta_{o(i)}$.  The cell structure in the lift of a neighborhood of a crossing $i$ of this type is shown in Figure ~\ref{fig:pb_over_knot}.

In this case,

$$\partial|_i \Sigma^k=\left(\sum_{j=1}^q x_i^j\right) k_i+\left(\sum_{j=1}^q x_{i+1}^j\right)k_{i+1}+\eta_{o(i)}^k+\sum_{i=1}^q  y_i^j a_i^j,$$

where again $y_i^j$ is a linear combination of the $x_r^s$.  As in Case 1, the first two sums are zero, and each linear combination $y_i^j$ of the $x_r^s$'s is equal to 0.

We again express $y_i^j$ in terms of the $x_r^s$ and set the result equal to 0 to obtain the desired linear equations.

The case $\epsilon(i)=1$ is shown in the top of Figure ~\ref{fig:pb_over_knot}. The only 2-cell of type $B$ appearing at crossing $i$ is $B_{o(i)}^k$.  We observe that $a_i^j$ is incident to $A_i^j$, $A_{i+1}^j$, $B_{o(i)}^{\sigma(i,j)}$, and $B_{o(i)}^\sigma(i,j+1)$, and appears in the boundaries of these 2-cells with coefficients 1, -1, -1, and 1, respectively.  If $\sigma(i,j)=k$, then $y_i^j=x_i^j-x_{i+1}^{j}-1$.  If $\sigma(i,j+1)=k$, then $y_i^j=x_i^j-x_{i+1}^{j}+1$.  Otherwise, $y_i^j=x_i^j-x_{i+1}^{j}.$

The case $\epsilon(i)=-1$ is similar, and shown in the bottom of Figure ~\ref{fig:pb_over_knot}.  In this case, $a_i^j$ appears in the boundaries of $A_i^j$, $A_{i+1}^j$, $B_{o(i)}^{\sigma(i,j)}$, and $B_{o(i)}^\sigma(i,j+1)$ with coefficients 1, -1, 1, and -1 respectively.   If $\sigma(i,j)=k$, then $y_i^j=x_i^j-x_{i+1}^{j}+1$.  If $\sigma(i,j+1)=k$, then $y_i^j=x_i^j-x_{i+1}^{j}-1$.  Otherwise, $y_i^j=x_i^j-x_{i+1}^{j}.$

Putting these two cases $\epsilon(i)=\pm 1$ together, and setting $y_i^j=0$, we get the final three equations

\[\begin{cases}

x_i^j-x_{i+1}^j=\epsilon(i) &\text{if } k_i
 \text{ terminates at the arc } \eta_{o(i)} \text{ and }\sigma(i,j)=k\\
 x_i^j-x_{i+1}^j=-\epsilon(i) &\text{if } k_i
 \text{ terminates at the arc } \eta_{o(i)} \text{ and }\sigma(i,j+1)=k\\
x_i^j-x_{i+1}^j=0&\text{if } k_i \text{ terminates at the arc } \eta_{o(i)} \text{ and } \sigma(i,j),\sigma(i,j+1)\neq k,\\
 \end{cases}\]

 as desired.
\end{proof}

\begin{thm} \label{thm:mainB} Let $K\cup \eta\cup\gamma$ be a link in $S^3$. We adopt the notation and hypotheses of Theorem~\ref{thm:mainA}, and in particular assume that $q$ divides the linking numbers of $\eta$ and $\gamma$ with $K$.  In addition, if $\gamma=\eta$, we assume in what follows that $j\neq k$.
    If $\gamma^j$ and $\eta^k$ are rationally null-homologous in $\Sigma_q(K)$, the linking number of the lift $\gamma^j$ with the lift $\eta^k$, with $\eta^k=\partial \Sigma^k$ defined as in Theorem~\ref{thm:mainA} is
        $\text{lk}(\gamma^j,\eta^k)=\sum_{i=0}^{s-1}a_i,$   where    
\begin{equation}\label{linking-formula}
     a_i=
        \begin{cases}
            \epsilon_\gamma(i)\cdot x_{o_\gamma(i)}^{\sigma_\gamma(i,j)}&\text{if }\gamma_i\text{ terminates at the arc }k_{o_\gamma(i)}\\
            \epsilon_\gamma(i)&\text{if }\gamma_i\text{ terminates at the arc }\eta_{o_\gamma(i)}\text{ and }\sigma_\gamma(i,j)=k\\
            0&\text{otherwise}
        \end{cases}
    \end{equation}
    
    \end{thm}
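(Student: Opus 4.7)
The plan is to compute $\text{lk}(\gamma^j,\eta^k)$ as the algebraic intersection number $\gamma^j\cdot\Sigma^k$ of the $1$-cycle $\gamma^j$ with the rational $2$-chain $\Sigma^k$ produced by Theorem~\ref{thm:mainA}, which satisfies $\partial\Sigma^k=\eta^k$. Such a $\Sigma^k$ exists by the hypothesis that $\eta^k$ is rationally null-homologous, and because $\gamma^j$ is disjoint from $\eta^k$ (automatic when $\gamma\neq\eta$, and guaranteed by the assumption $j\neq k$ when $\gamma=\eta$), the rational intersection number is well-defined and equals the linking number.

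First I would reduce the intersection $\gamma^j\cdot\Sigma^k$ to a sum of local contributions, one for each arc $\gamma_i^j$. The key observation is that the horizontal $1$-cells $\gamma_i^j$ lie at the projection level, while every $2$-cell appearing in $\Sigma^k$ is a wall, i.e.\ a cone on a horizontal $1$-cell with apex at $c$. A transverse push-off of $\gamma_i^j$ can therefore only meet a wall $A_r^s$ or $B_r^s$ at a lift of a crossing where $\gamma_i$ is the understrand and the overstrand is the horizontal boundary of that wall. After accounting for tail-vs.-head at shared endpoints of consecutive arcs (where contributions cancel), exactly one intersection point remains per arc $\gamma_i^j$, located at its head crossing.

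Next I would identify which particular lift of the overstrand wall is pierced by $\gamma_i^j$ at its head. This is precisely the data encoded by $\sigma_\gamma(i,j)$: by construction, if $\gamma_i$ terminates at the arc $k_{o_\gamma(i)}$, the wall pierced is $A_{o_\gamma(i)}^{\sigma_\gamma(i,j)}$, whose coefficient in $\Sigma^k$ is $x_{o_\gamma(i)}^{\sigma_\gamma(i,j)}$; if instead $\gamma_i$ terminates at the arc $\eta_{o_\gamma(i)}$, the wall pierced is $B_{o_\gamma(i)}^{\sigma_\gamma(i,j)}$, and since $\Sigma^k$ contains only the single $B$-lift $B_\bullet^k$ at each index, its coefficient in $\Sigma^k$ is $1$ when $\sigma_\gamma(i,j)=k$ and $0$ otherwise. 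This already recovers the case split and the absolute values of the formula~\eqref{linking-formula}.

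Finally I would compute the sign of each such intersection. The orientation conventions fixed for the walls in Section~\ref{sec:cellular} are set up so that the tangent vector of $\gamma_i^j$ at the head crossing, together with the co-orientation of the pierced wall, agrees with the orientation of $\Sigma_q(K)$ exactly when $\epsilon_\gamma(i)=+1$. Hence each local contribution is $\epsilon_\gamma(i)$ times the coefficient computed above, and summing over $i$ produces $\sum_{i=0}^{s-1}a_i$ as claimed. The main obstacle is this last bookkeeping step: one must verify, by the same local analysis carried out in the proof of Theorem~\ref{thm:mainA} at the crossings depicted in Figures~\ref{fig:knot_over_knot} and~\ref{fig:pb_over_knot} (now with $\gamma$ playing the role of the understrand), that the conventions determining $\omega_\gamma$, $\sigma_\gamma$, and the wall orientations combine to yield sign $\epsilon_\gamma(i)$ rather than $-\epsilon_\gamma(i)$, and that the correct lift is $\sigma_\gamma(i,j)$ rather than $\sigma_\gamma(i,j+1)$.
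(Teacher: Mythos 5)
Your proposal is correct and follows essentially the same route as the paper's own proof: compute $\text{lk}(\gamma^j,\eta^k)$ as the signed intersection number of $\gamma^j$ with the $2$-chain $\Sigma^k$, localize the intersections at the heads of the arcs $\gamma_i^j$, read off the pierced wall via $o_\gamma$ and $\sigma_\gamma$, and multiply its coefficient in $\Sigma^k$ by the sign $\epsilon_\gamma(i)$. The sign and lift-index conventions you flag as the remaining bookkeeping are exactly what the paper settles by appeal to its Figures and the conventions of Section~\ref{sec:cellular}.
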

   
\begin{figure}
    \centering
    \includegraphics[width=6in]{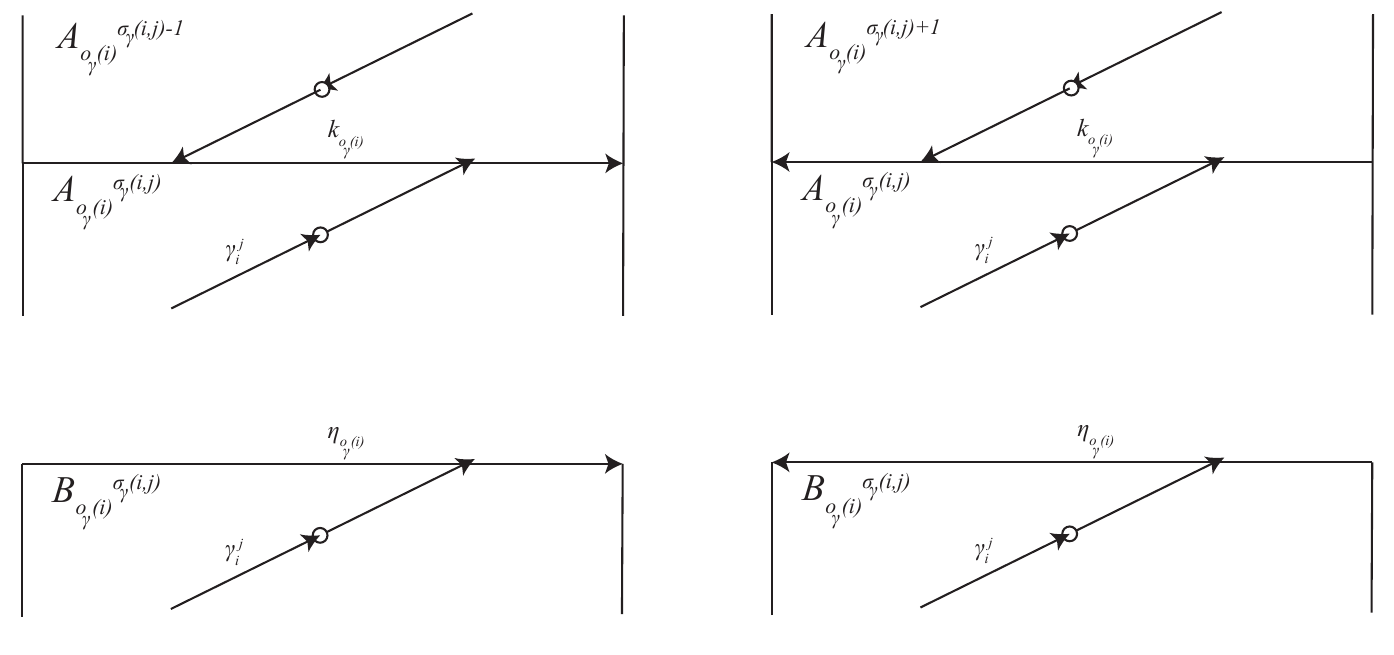}
    \caption{The lift second pseudo-branch curve $\gamma$ and the cell structure when $\gamma_i$ terminates at an arc of the knot $K$ (top, positive and negative crossings) and at an arc of the first pseudo-branch curve $\eta$ (bottom, positive and negative crossings).}
    \label{fig:pb2_under_knot_or_pb1}
\end{figure}

\begin{proof}
  Recall that, if $\eta^k$ bounds the rational chain $\Sigma^k$, the linking number $\text{lk}(\gamma^j,\eta^k)$ equals the signed intersection number between $\gamma^j$ and $\Sigma^k$. This number is symmetric and independent of the choice of two-chain~\cite{birman1980seifert}.  To compute the intersection number, we first need to ensure that $\gamma^j$ and $\Sigma^k$ are transverse. Recall that we started with a generic projection of the link diagram and we obtained the 2-skeleton by taking the cone on $K\cup \eta$; we then lifted this cell structure to $\Sigma_q(K)$. Therefore, by construction, $\gamma^j$, a lift of $\gamma$, is transverse to all 2-cells in $\Sigma_q(K)$ except (in the case where $\gamma=\eta$) those it bounds. By assumption, $\gamma^j\neq \eta^k$ and therefore, by Equation~\ref{eq:2-chain-generic}, $\gamma^j$ is not on the boundary of any of the 2-cells in $\Sigma^k$. In sum, $\gamma^j$ is transverse to all cells in the 2-chain $\Sigma^k$.
   
  Recall that $\gamma^j$ can be regarded as a boundary union of embedded arcs, $\gamma^j=\sum_{i=0}^{s-1}\gamma_i^j$. These endpoints are lifts of crossings in the diagram where $\gamma$ passes under $K$ or $\eta$. 
  Moreover,  intersections between $\gamma^j$ and $\Sigma^k$ can only occur at the endpoints of these arcs. 
  
  For each pair $i, j$, we consider endpoint where the arc $\gamma_i^j$ terminates. Which type ($A$ or $B$) of vertical 2-cell this endpoint belongs to is determined by the link component ($K$ or $\eta$, respectively) containing the over-strand at the crossing where the $i$-th strand of $\gamma$ ends. The subscript of the 2-cell is determined by the function $o_\gamma(i)$, and the superscript by the function $\sigma_\gamma(i,j)$. The coefficient of this 2-cell in $\Sigma^k$ equals  $x_{o_\gamma(i)}^{\sigma_\gamma(i,j)}$ if it is an $A$ cell; $1$ if it is a $B$ cell with horizontal boundary belonging to $\eta^k$; 0 otherwise. (The reader may refer again to Equation~\ref{eq:2-chain-generic}.)  Finally, the sign of the intersection between the 2-cell and $\gamma^j$ is determined by the function $\epsilon_\gamma(i)$. The term $a_i$ in the theorem statement is computed from the above data: it equals 0 if the 2-cell where $\gamma_i^j$ terminates is not part of $\Sigma^k$; otherwise it equals the coefficient of this 2-cell times the sign of the crossing. 
\end{proof}

\section{Main Theorem, General Case}\label{general.sec}

We now drop the hypothesis that $q$ divides the linking numbers $\text{lk}(K,\eta)$ and $\text{lk}(K,\gamma)$.

As before, $\eta$ (respectively $\gamma$) has $q$ path-lifts, $\eta^1,\dots, \eta^q$, labeled so that $\eta_0^k$ is in the 3-cell $E^k$; but the $\eta^k$ may not be closed loops.  However, the $\eta^i$ fit together to form closed components, what we call pseudo-branch curves, and we can compute linking numbers between these closed components and the corresponding closed components for $\gamma$.

First, observe that the number, $I_\eta$, of connected components in the lift of $\eta$ equals the index, in $\mathbb{Z}/q\mathbb{Z}$, of the subgroup generated by $\text{lk}(K,\eta)$. 
Traveling along a component of the lift of $\eta$, one will encounter path-lifts $\eta^r, \eta^s, \dots$ whose indices jump by $\text{lk}(K,\eta) \mod q =:\bar{l}_\eta$, until they begin to repeat. 
The pseudo-branch curves $\boldsymbol{\eta}^k$ projecting to $\eta$ are therefore
$$\boldsymbol{\eta}^k=\eta^k \cup \eta^{k+\bar{l}_\eta} \cup \dots \cup \eta^{k+(q/I-1)\cdot \bar{l}_\eta} ,$$
where $k\in\{1,\dots, I_\eta\}$.  The superscripts correspond to elements of the coset $G_\eta^k=k+\langle \bar{l}_\eta\rangle$ in $\mathbb{Z}/q\mathbb{Z}$, with values taken in $\{1,\dots,q\}$. 
We define the closed component $\boldsymbol{\gamma}^j$  
as well as $G_\gamma^j$ 
analogously, with $j\in\{1,\dots, I_\gamma\}$.

Reasoning as we did in order to arrive at Equation~\ref{eq:2-chain-generic}, we see that a generic 2-chain bounding $\boldsymbol{\eta}^k$, if it exists, takes the form 
\begin{equation}\label{eq:2-chain-generic-general}
\boldsymbol{\Sigma}^k= \sum_{k'\in G_\eta^k}\left(\sum_{i=0}^{m-1} B_i^{k'}\right) +\sum_{i=0}^{n-1} x_i^j A_i^j.    
\end{equation}

\begin{thm} \label{thm:mainAgeneral}
Let  $K\cup\eta$ be a link in $S^3$. We use the notation of Section~\ref{sec:cellular}. The pseudo-branch component $\boldsymbol{\eta}^k$ of $\eta\in S^3(K)$ in $\Sigma_q(K)$, if rationally nullhomologous, bounds the 2-chain $\boldsymbol{\Sigma}^k$ in Equation~\ref{eq:2-chain-generic-general}, where the $x_i^j$ satisfy $\sum_{j=1}^q x_i^j=0$ for each $i$, and are simultaneously a solution to the following system:
\[\begin{cases}
x_i^j-x_{i+1}^j-\epsilon(i)(x_{o(i)}^{\sigma(i,j)}-x_{o(i)}^{\sigma(i,j+1)})=0&\text{if }k_i\text{ terminates at the arc } k_{o(i)}\\
x_i^j-x_{i+1}^j=\epsilon(i) &\text{if } k_i
 \text{ terminates at the arc } \eta_{o(i)}, \\ 
 &\sigma(i,j)\in G_\eta^k,\text{ and }\sigma(i,j+1)\notin G_\eta^k\\
 x_i^j-x_{i+1}^j=-\epsilon(i) &\text{if } k_i
 \text{ terminates at the arc } \eta_{o(i)},\\
 &\sigma(i,j+1)\in G_\eta^k,\text{ and }\sigma(i,j)\notin G_\eta^k \\
x_i^j-x_{i+1}^j=0&\text{if } k_i \text{ terminates at the arc } \eta_{o(i)}\text{ and } \\
&\{\sigma(i,j),\sigma(i,j+1)\}\subset G_\eta^k 
\text{ or } \{\sigma(i,j),\sigma(i,j+1)\}\cap G_\eta^k=\emptyset\\
 \end{cases}\]
\noindent with superscripts taken mod $q$ between $1$ and $q$, and subscripts taken mod $n$ between $0$ and $n-1$.  If the system of equations above has no solution over $\mathbb{Q}$,  $\boldsymbol{\eta}^k$ is non-zero in $H_1(\Sigma_q; \mathbb{Q}).$ 
\end{thm}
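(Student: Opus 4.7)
The plan is to mirror the proof of Theorem~\ref{thm:mainA}, with the only substantive change coming from the fact that the bounding 2-chain now has a more involved horizontal part, indexed by the coset $G_\eta^k \subset \mathbb{Z}/q\mathbb{Z}$. I would start by confirming that the ansatz in Equation~\ref{eq:2-chain-generic-general} is the most general rational 2-chain that could possibly bound $\boldsymbol{\eta}^k$: each arc $\eta_i^{k'}$ of the pseudo-branch curve $\boldsymbol{\eta}^k$ lies on the boundary of exactly one 2-cell, namely $B_i^{k'}$, and must appear with coefficient $1$. This forces the $B$-part of the chain to be precisely $\sum_{k' \in G_\eta^k}\sum_{i=0}^{m-1} B_i^{k'}$, leaving the coefficients $x_i^j$ of the $A$-cells as the only unknowns.

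Next, for each horizontal 1-cell $k_i$ on $K$, the coefficient of $k_i$ in $\partial \boldsymbol{\Sigma}^k$ is $\sum_{j=1}^q x_i^j$, and since $k_i \not\subset \boldsymbol{\eta}^k$, this sum must vanish, recovering the first constraint. For the remaining constraints, I would compute $\partial|_i \boldsymbol{\Sigma}^k$ at each lift of each crossing $c_i$, exactly as in the proof of Theorem~\ref{thm:mainA}. At a knot-over-knot crossing (Case~1 of that proof), no $B$-cells participate, so the computation and the resulting equation $x_i^j - x_{i+1}^j - \epsilon(i)(x_{o(i)}^{\sigma(i,j)} - x_{o(i)}^{\sigma(i,j+1)}) = 0$ go through verbatim.

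The real work is Case~2, where $k_i$ passes under $\eta_{o(i)}$. As in Theorem~\ref{thm:mainA}, the vertical 1-cell $a_i^j$ is incident to $A_i^j$, $A_{i+1}^j$, $B_{o(i)}^{\sigma(i,j)}$, and $B_{o(i)}^{\sigma(i,j+1)}$, contributing the coefficients recorded in Figure~\ref{fig:pb_over_knot}. The difference now is that $B_{o(i)}^{\sigma(i,j)}$ appears in $\boldsymbol{\Sigma}^k$ with coefficient $1$ if $\sigma(i,j) \in G_\eta^k$ and with coefficient $0$ otherwise, and similarly for $B_{o(i)}^{\sigma(i,j+1)}$. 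Writing $y_i^j$ for the coefficient of $a_i^j$ in $\partial \boldsymbol{\Sigma}^k$ and setting $y_i^j = 0$ (since $a_i^j \not\subset \boldsymbol{\eta}^k$) splits into four subcases depending on which of $\sigma(i,j), \sigma(i,j+1)$ lie in $G_\eta^k$. Keeping track of the sign $\epsilon(i)$ as in Theorem~\ref{thm:mainA} yields exactly the three $\eta_{o(i)}$-crossing equations in the statement (the case where both $\sigma(i,j)$ and $\sigma(i,j+1)$ lie in $G_\eta^k$ and the case where neither does both produce $x_i^j - x_{i+1}^j = 0$, since the two $\mp 1$ contributions either both vanish or cancel).

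The main obstacle is purely bookkeeping: making sure the four membership cases for $\sigma(i,j), \sigma(i,j+1) \in G_\eta^k$ are exhaustive and produce the stated equations with the correct signs; once one is careful about which $B$-cells contribute at each lift, no new geometric input is needed beyond the cellular setup already used for Theorem~\ref{thm:mainA}. The final sentence of the theorem — that inconsistency of the linear system witnesses non-triviality of $\boldsymbol{\eta}^k$ in $H_1(\Sigma_q; \mathbb{Q})$ — follows because any rational 2-chain bounding $\boldsymbol{\eta}^k$ can be brought into the form of Equation~\ref{eq:2-chain-generic-general} by the uniqueness argument in the first paragraph, so a solution to the system is necessary for $\boldsymbol{\eta}^k$ to be a rational boundary.
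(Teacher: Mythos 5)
Your proposal is correct and follows essentially the same route as the paper, which proves Theorem~\ref{thm:mainAgeneral} simply by declaring it analogous to Theorem~\ref{thm:mainA} with the path-lifts grouped into the components indexed by $G_\eta^k$. Your fleshed-out version of that analogy is accurate: the $B$-part of the chain is forced to run over the coset $G_\eta^k$, Case~1 is unchanged, and your four-way membership analysis in Case~2 (with the two unit contributions cancelling when both $\sigma(i,j)$ and $\sigma(i,j+1)$ lie in $G_\eta^k$) reproduces exactly the stated system.
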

The proof of Theorem ~\ref{thm:mainAgeneral} is analogous to the proof of Theorem ~\ref{thm:mainA} but this time taking into account the way different path lifts combine to form connected components.

Now to compute the linking number of $\boldsymbol{\gamma}^j$ with $\boldsymbol{\eta}^k$, we first use Theorem ~\ref{thm:mainAgeneral} to find a rational 2-chain $\boldsymbol{\Sigma}^k$ bounding $\boldsymbol{\eta}^k$, and to verify that $\boldsymbol{\gamma}^j$ is rationally null-homologous.  We then determine the linking numbers according to the following formula:

\begin{thm} \label{thm:mainBgeneral} Let $K\cup \eta\cup\gamma$ be a link in $S^3$. We adopt the notation and hypotheses of Theorem~\ref{thm:mainAgeneral}.  In addition, if $\gamma=\eta$, we assume in what follows that $j\neq k$.
    If $\boldsymbol{\gamma}^j$ and $\boldsymbol{\eta}^k$ are rationally null-homologous in $\Sigma_q(K)$, the linking number of the lift $\boldsymbol{\gamma}^j$ with the lift $\boldsymbol{\eta}^k$, with $\boldsymbol{\eta}^k=\partial \boldsymbol{\Sigma}^k$ defined as in Theorem~\ref{thm:mainAgeneral} is
        $\text{lk}(\boldsymbol{\gamma}^j,\boldsymbol{\eta}^k)=\sum_{j'\in G_\gamma^j}\left(\sum_{i=0}^{s-1}a_{i,j'}\right),$   where    
\begin{equation}\label{linking-formula-general}
     a_{i,j'}=
        \begin{cases}
            \epsilon_\gamma(i)\cdot x_{o_\gamma(i)}^{\sigma_\gamma(i,j')}&\text{if }\gamma_i\text{ terminates at the arc }k_{o_\gamma(i)}\\
            \epsilon_\gamma(i)&\text{if }\gamma_i\text{ terminates at the arc }\eta_{o_\gamma(i)}\text{ and }\sigma_\gamma(i,j')\in G_\eta^k\\
            0&\text{otherwise}
        \end{cases}
    \end{equation}
    
    \end{thm}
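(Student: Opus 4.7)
The plan is to follow the outline of the proof of Theorem~\ref{thm:mainB}, modifying each step so that path-lifts are replaced by the closed components into which they assemble. As before, since $\boldsymbol{\eta}^k$ is rationally null-homologous and bounds the 2-chain $\boldsymbol{\Sigma}^k$ produced by Theorem~\ref{thm:mainAgeneral}, the linking number $\text{lk}(\boldsymbol{\gamma}^j,\boldsymbol{\eta}^k)$ equals the signed intersection number $\boldsymbol{\gamma}^j \cdot \boldsymbol{\Sigma}^k$, which is independent of the choice of chain. The first thing I would verify is transversality. The 1-cells making up $\boldsymbol{\gamma}^j$ are lifts of arcs of $\gamma$, and the interiors of the 2-cells in $\boldsymbol{\Sigma}^k$ project to the interiors of horizontal or vertical 2-cells in the cone $c(D)$; by construction these meet $\gamma$ (and hence each lift) transversely away from the endpoints of the $\gamma_i$. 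When $\gamma = \eta$, the additional hypothesis $j \neq k$ together with Equation~\ref{eq:2-chain-generic-general} guarantees that no component of $\boldsymbol{\gamma}^j$ lies on the boundary of $\boldsymbol{\Sigma}^k$, so transversality holds in that case too.

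Next I would decompose the intersection. By definition, $\boldsymbol{\gamma}^j = \bigcup_{j'\in G_\gamma^j}\gamma^{j'}$, and each path-lift $\gamma^{j'}$ is a concatenation of arcs $\gamma_i^{j'}$ for $i = 0, \dots, s-1$. All intersections with $\boldsymbol{\Sigma}^k$ take place at the terminal endpoints of these arcs, since those are precisely the points where a lift of $\gamma$ crosses a vertical 2-cell (a lift of an $A_i$ or $B_i$), and the horizontal 2-cells in $\boldsymbol{\Sigma}^k$ only touch $\gamma^{j'}$ in the degenerate case handled by transversality. This gives the double sum decomposition
\[
\text{lk}(\boldsymbol{\gamma}^j,\boldsymbol{\eta}^k) = \sum_{j'\in G_\gamma^j}\sum_{i=0}^{s-1} a_{i,j'},
\]
where $a_{i,j'}$ is the contribution to the intersection number at the terminal endpoint of $\gamma_i^{j'}$.

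To compute $a_{i,j'}$, I would analyze cases exactly as in the proof of Theorem~\ref{thm:mainB}, using Figure~\ref{fig:pb2_under_knot_or_pb1} as the local model. If $\gamma_i$ terminates at an arc $k_{o_\gamma(i)}$ of $K$, then the 2-cell crossed is $A_{o_\gamma(i)}^{\sigma_\gamma(i,j')}$, whose coefficient in $\boldsymbol{\Sigma}^k$ is $x_{o_\gamma(i)}^{\sigma_\gamma(i,j')}$; multiplying by the sign $\epsilon_\gamma(i)$ of the crossing gives the first case of Equation~\ref{linking-formula-general}. If instead $\gamma_i$ terminates at an arc $\eta_{o_\gamma(i)}$, then the 2-cell crossed is $B_{o_\gamma(i)}^{\sigma_\gamma(i,j')}$. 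Here is where the generalization of Theorem~\ref{thm:mainB} enters: rather than a single $B$-cell with superscript $k$ appearing in $\boldsymbol{\Sigma}^k$, the chain in Equation~\ref{eq:2-chain-generic-general} contains exactly those $B_{o_\gamma(i)}^{k'}$ with $k' \in G_\eta^k$, each with coefficient $1$. Thus the intersection contributes $\epsilon_\gamma(i)$ when $\sigma_\gamma(i,j') \in G_\eta^k$ and $0$ otherwise.

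The main obstacle, such as it is, is purely bookkeeping: correctly translating the single index condition ``$\sigma(i,j) = k$'' from the specialized theorem into the coset membership condition ``$\sigma_\gamma(i,j') \in G_\eta^k$'' in the general case, and checking that summing over $j'\in G_\gamma^j$ indeed enumerates all path-lifts making up $\boldsymbol{\gamma}^j$ without repetition. No new geometric idea is required; the rest of the argument carries over from Theorem~\ref{thm:mainB} verbatim, yielding Equation~\ref{linking-formula-general}.
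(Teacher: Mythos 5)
Your proposal is correct and follows essentially the same route as the paper, which simply notes that the argument of Theorem~\ref{thm:mainB} carries over with the per-endpoint contributions distributed across path-lifts according to the indexing sets $G_\gamma^j$, and with the single $B$-cell condition $\sigma(i,j)=k$ replaced by coset membership $\sigma_\gamma(i,j')\in G_\eta^k$ as dictated by Equation~\ref{eq:2-chain-generic-general}. Your expanded treatment of transversality and of the disjointness of components when $\gamma=\eta$ and $j\neq k$ is a faithful elaboration of what the paper leaves implicit.
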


    The proof of Theorem ~\ref{thm:mainBgeneral} is analogous to the proof of Theorem ~\ref{thm:mainB}. The contribution of each intersection point of a path-lift of $\gamma$ and a 2-cell in $\boldsymbol{\Sigma}^k$ is computed as before; these contributions are then distributed across connected components $\boldsymbol{\gamma}^j$ as dictated by the indexing sets $G_\gamma^j$.

\section{Applications}\label{sec:apps}
One application of computing linking numbers of pseudobranch curves in cyclic branched covers is to obstruct satellite operations from inducing homomorphisms on concordance, using the following results from~\cite{lidman2022linking}. Throughout this section, $\eta$ denotes a meridian of the solid torus in which the satellite pattern is embedded. 

\begin{thm} \label{thm:obstruction-statement} \cite{lidman2022linking}
Let $P$ be a pattern in the solid torus with winding number $w$, and let $q$
be a prime power dividing $w$. Suppose that the lifts $\eta_1, \dots, \eta_q$ of $\eta$ to $\Sigma_q(P(U))$  are null-homologous in $\Sigma_q(P(U))$. If $lk(\eta_i, \eta_j )\geq 0$ for all $i\neq j$, but is not identically zero, then $P$ does not induce a homomorphism of the smooth concordance group.
\end{thm}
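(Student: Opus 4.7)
The plan is to assume, for contradiction, that $P$ induces a well-defined group homomorphism on the smooth concordance group $\mathcal{C}$, and then to extract a contradiction from the linking data $\{\mathrm{lk}(\eta_i,\eta_j)\}$ by passing to $q$-fold cyclic branched covers of satellites $P(K)$ for well-chosen companions $K$. The homomorphism hypothesis forces $P(K\#K')\sim P(K)\#P(K')$ in $\mathcal{C}$, and in particular $P(K)\#P(-K)$ is smoothly concordant to $P(U)$; the rest of the argument is about producing a smooth concordance invariant that sees the linking matrix and therefore violates this additivity.

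\textbf{Step 1: satellite decomposition of the cover.} Since $q \mid w$, the meridian $\eta\subset S^3\setminus P(U)$ lifts to $q$ disjoint simple closed curves $\eta_1,\dots,\eta_q\subset\Sigma_q(P(U))$, which are null-homologous by hypothesis. For any companion $K$, the manifold $\Sigma_q(P(K))$ is obtained from $\Sigma_q(P(U))$ by removing tubular neighborhoods of the $\eta_i$ and gluing in $q$ copies of the exterior of $K$ along the meridional framing inherited from $\eta$.  In particular $\Sigma_q(P(K))$ and $\Sigma_q(P(U))$ agree outside these $q$ solid tori, and the linking form on $H_1(\Sigma_q(P(U));\mathbb{Q})$ restricted to the classes $[\eta_i]$ is encoded by the matrix $L=(\mathrm{lk}(\eta_i,\eta_j))_{i,j}$.

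\textbf{Step 2: a linking-sensitive concordance invariant.} The next step is to identify a concordance invariant of the satellite that picks up both an invariant of $K$ and the linking matrix $L$. The natural candidate is a Heegaard Floer $d$-invariant (correction term) of $\Sigma_q(P(K))$ in an appropriate spin$^c$ structure, which, because the $\eta_i$ are null-homologous, pulls back cleanly from $\Sigma_q(P(U))$. Using the satellite decomposition of Step 1 together with surgery-type formulas relating the $d$-invariants of $\Sigma_q(P(K))$ to those of $\Sigma_q(P(U))$ and to the integer-valued invariants $V_0(K)$ (or $V_0(-K)$) of the companion, one obtains a formula of the schematic form
\[
d\bigl(\Sigma_q(P(K)),\mathfrak{s}\bigr)-d\bigl(\Sigma_q(P(U)),\mathfrak{s}_0\bigr)=F\bigl(L,V_0(K),V_0(-K)\bigr),
\]
where $F$ depends linearly on the coefficients of $L$ applied to the companion invariants.

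\textbf{Step 3: positivity forces a one-sided contradiction.} If $P$ induced a homomorphism on $\mathcal{C}$, then combining $P(K)\#P(-K)\sim P(U)$ with the additivity of $d$-invariants under connected sum would force $F(L,V_0(K),V_0(-K))=0$ for every $K$. However, the linking matrix $L$ has nonnegative entries and is not identically zero, so $F$ is a one-sided functional: choosing $K$ with $V_0(K)$ large and $V_0(-K)=0$ (for instance $K=T_{2,2n+1}$ for large $n$), the term $F(L,V_0(K),V_0(-K))$ is strictly nonzero and of a definite sign, contradicting the homomorphism identity.

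\textbf{Main obstacle.} The conceptual step is Step 2: producing the explicit surgery/cobordism formula that relates $d(\Sigma_q(P(K)))$ to $d(\Sigma_q(P(U)))$ and the companion invariants $V_0(\pm K)$, with the linking matrix $L$ playing the role of an intersection form on a natural cobordism from $\Sigma_q(P(U))\sqcup(\text{surgeries on }K)$ to $\Sigma_q(P(K))$. Once this formula is established and its dependence on $L$ is shown to be monotone in the nonnegative-entry direction, the positivity hypothesis in the theorem yields the contradiction essentially for free.
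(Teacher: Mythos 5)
This statement is not proved in the paper at all: it is quoted verbatim, with citation, from~\cite{lidman2022linking}, and the paper only \emph{applies} it (via the linking-number computations of Theorems~\ref{thm:mainA}--\ref{thm:mainBgeneral}). So there is no in-paper proof to compare against; your proposal has to be judged as an attempted proof of the external result. Your overall strategy is at least the right genre: use the homomorphism hypothesis to get, for every companion $K$, a concordance such as $P(K)\# P(-K)\sim P(U)$; describe $\Sigma_q(P(K))$ as the result of infecting $\Sigma_q(P(U))$ along the null-homologous lifts $\eta_1,\dots,\eta_q$ by the exterior of $K$ (your Step 1 is correct and standard); and then feed this into a one-sided Floer-theoretic invariant, choosing companions like $T_{2,2n+1}$ with $V_0$ large and $V_0$ of the mirror zero, so that the sign hypothesis on $\mathrm{lk}(\eta_i,\eta_j)$ produces a contradiction.

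The genuine gap is that Step 2, which you yourself flag as the main obstacle, is the entire mathematical content of the theorem, and as stated it is not available: there is no clean equality $d(\Sigma_q(P(K)),\mathfrak{s})-d(\Sigma_q(P(U)),\mathfrak{s}_0)=F(L,V_0(K),V_0(-K))$ depending linearly on the linking matrix. What exists are $d$-invariant \emph{inequalities} coming from explicit negative-definite or sharp cobordisms built by attaching $2$-handles along the lifts, and these involve the \emph{framings} (self-linkings) of the $\eta_i$, which the hypothesis does not constrain at all -- your Step 3 monotonicity claim in the off-diagonal entries of $L$ therefore does not come for free. Moreover, your Step 3 invokes plain additivity of $d$-invariants under the concordance $P(K)\#P(-K)\sim P(U)$, but a concordance only gives a rational homology cobordism between the prime-power branched covers, so $d$-invariant constraints hold only for spin$^c$ structures extending over that cobordism; one must argue over all possible metabolizers of the linking form rather than a single distinguished spin$^c$ structure. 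Notice also that your argument never uses the hypothesis that $q$ is a prime power, which is exactly what makes the rational-homology-cobordism/metabolizer step work; an argument that never needs that hypothesis cannot be correct as written. Until the surgery/cobordism formula, the framing bookkeeping, and the metabolizer argument are actually carried out, this is a plausible plan rather than a proof.
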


\begin{thm}\label{thm:obstruction-2} \cite{lidman2022linking}
Let $P$ be a pattern in the solid torus with winding number $w$, and let $q$
be a prime power dividing $w$. Let $\eta_1, \dots, \eta_q$ denote the lifts of $\eta$ to $\Sigma_q(P(U))$ and let $n$ denote the order of $[\eta_1]$ in $H_1(\Sigma_q(P(U)))$. Suppose that either $n$ is odd or $w$ is a nonzero multiple of $n$.  If $lk(\eta_i, \eta_j )\geq 0$ for all $i\neq j$, but is not identically zero, then $P$ does not induce a homomorphism of the smooth concordance group.
\end{thm}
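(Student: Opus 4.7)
The plan is to assume $P$ induces a homomorphism of the smooth concordance group $\mathcal{C}$ and derive a contradiction from the positivity of the linking matrix $L = (\text{lk}(\eta_i,\eta_j))$. If $P$ is a homomorphism then for every knot $K$ the ``defect knot''
\begin{equation*}
J_K := P(K)\mathbin{\#}P(-K)\mathbin{\#}(-P(U))
\end{equation*}
is smoothly slice, so its $q$-fold cyclic branched cover $\Sigma_q(J_K)$ bounds a smooth rational homology $4$-ball. Because $q$ divides $w = \text{lk}(\eta,P)$, the lifts $\eta_1,\dots,\eta_q$ of $\eta$ are closed, and naturality of branched covers under infection identifies $\Sigma_q(P(\pm K))$ with the result of simultaneous infection of $\Sigma_q(P(U))$ along $\eta_1,\dots,\eta_q$ with companion $\pm K$; I would establish this compatibility first.

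Next I would upgrade the slicing conclusion to a linking-form obstruction. By Casson--Gordon, the rational linking form on $H_1(\Sigma_q(J_K);\mathbb{Q}/\mathbb{Z})$ must admit a metabolizer; and since this form is the orthogonal sum of the forms of the three connected summands, I can analyze it summand by summand using the infection description of the first step. The auxiliary hypothesis that $n$ is odd or $w$ is a nonzero multiple of $n$ is used here: it guarantees that each $[\eta_i]$ has order $n$, that the span of $[\eta_1],\dots,[\eta_q]$ in $H_1(\Sigma_q(P(U)))$ is a $\mathbb{Z}/q$-invariant $\mathbb{Z}/n$-submodule on which the linking pairing is represented by $L \bmod n$, and that this block embeds cleanly into the summand structure of $\Sigma_q(J_K)$. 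Tracking the infection along the $\eta_i$ by $K$ in the first summand and by $-K$ in the second, the $K$-dependent contributions cancel, leaving in the linking form of $\Sigma_q(J_K)$ an explicit block controlled by $L$ alone.

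The final step is a positivity argument. Since every off-diagonal entry of $L$ is nonnegative and at least one is strictly positive, for any vector $v$ with nonnegative entries one has $v^{\top}L\,v \geq 0$, with strict inequality whenever $\mathrm{supp}(v)$ meets a positive off-diagonal entry. Any metabolizer of the linking form of $\Sigma_q(J_K)$ that is compatible with the deck symmetry must contain a nonzero $\mathbb{Z}/q$-invariant vector (for instance the sum over a $\mathbb{Z}/q$-orbit of a nonzero element), whose image in the $L$-block has nonnegative coordinates. Such a vector pairs strictly positively with itself, contradicting the defining isotropy property of a metabolizer. Hence $J_K$ cannot be slice, and $P$ is not a homomorphism.

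The hardest step is the middle paragraph: one must translate simultaneous infection along $\eta_1,\dots,\eta_q$ into a precise surgery formula for the linking form, and then verify that under either alternative on $n$ the $L$-block splits off cleanly inside $\Sigma_q(J_K)$. Under the ``$n$ odd'' alternative I would decompose by characters of the $\mathbb{Z}/q$-action on $H_1$ and check each nontrivial isotypic component separately; under the ``$w$ a multiple of $n$'' alternative the class $[\eta_i]$ is lifted from the base via a Wang-type sequence, so infection contributes an honest orthogonal summand. Once the $L$-block is isolated in a deck-invariant way, the positivity and metabolizer step is elementary linear algebra, and the odd/divisibility dichotomy is exactly what prevents a sign issue that would otherwise let a metabolizer cheat the positivity.
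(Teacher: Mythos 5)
First, note that the paper does not prove Theorem~\ref{thm:obstruction-2} at all: it is quoted verbatim from~\cite{lidman2022linking} and used as a black box in Section~\ref{sec:apps}, so there is no in-paper proof to compare against. Your sketch must therefore be judged on its own, and as written it rests on an approach that cannot work. Your opening moves are fine (if $P$ is a homomorphism then $P(U)$ and $J_K=P(K)\#P(-K)\#(-P(U))$ are slice, $\Sigma_q$ of a slice knot bounds a rational homology ball for $q$ a prime power, and $\Sigma_q(P(K))$ is obtained from $\Sigma_q(P(U))$ by infection along $\eta_1,\dots,\eta_q$ with companion $K$). The fatal step is the middle paragraph: infection along a curve preserves $H_1$ and the $\mathbb{Q}/\mathbb{Z}$-valued linking form (a Mayer--Vietoris computation identifies $H_1$ of the infected manifold with $H_1$ of the original, with the meridian of the companion replacing the longitude of $\eta_i$). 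Consequently the linking form of $\Sigma_q(J_K)$ is $\lambda\oplus\lambda\oplus(-\lambda)$ with $\lambda$ the form of $\Sigma_q(P(U))$, \emph{independently of $K$ and of the numbers $\mathrm{lk}(\eta_i,\eta_j)$}; and since $P(U)$ is slice whenever $P$ is a homomorphism, $\lambda$ already admits a metabolizer, hence so does the direct sum. There is no ``$L$-block'' to isolate and no contradiction to extract. A second, independent reason the strategy is doomed: the hypothesis concerns the actual rational numbers $\mathrm{lk}(\eta_i,\eta_j)\in\mathbb{Q}$ being nonnegative and not all zero, which is strictly finer information than their reductions in $\mathbb{Q}/\mathbb{Z}$ (e.g.\ the matrices with all off-diagonal entries $+1$ versus $-1$ satisfy and violate the hypothesis, respectively, yet have identical images in $\mathbb{Q}/\mathbb{Z}$), so no $\mathbb{Q}/\mathbb{Z}$-linking-form argument can see the hypothesis at all.

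The missing idea is that one needs an invariant that genuinely changes under infection, and --- as the explicitly \emph{smooth} conclusion signals --- a smooth-specific one. The argument of~\cite{lidman2022linking} runs through Heegaard Floer correction terms: infecting $\Sigma_q(P(U))$ along the $\eta_i$ by a companion with large $d$-invariant contributions (e.g.\ suitable torus knots) shifts the $d$-invariants of the branched cover in a way governed by the classes $[\eta_i]$ and the rational linking numbers; the nonnegativity hypothesis forces these shifts to accumulate with a definite sign, violating the $d$-invariant constraint on manifolds bounding rational homology balls, and the hypothesis that $n$ is odd or $w$ is a nonzero multiple of $n$ is what guarantees an appropriate metabolizing $\mathrm{spin}^c$ structure to which that constraint applies. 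Your positivity heuristic in the last paragraph is the right instinct, but it has to be run on correction terms indexed by $\mathrm{spin}^c$ structures, not on the linking form.
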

     
In this section, we apply our theorem to evaluate, in a variety of cases, the above obstructions.
To start, we recover the (easy) fact that for $n\neq \pm 1$ the $(n,1)$ cable map does not induce a homomorphism on smooth concordance. In Example~\ref{ex:modified-cable}, we also consider certain satellites obtained by crossing changes in the standard diagram of the cabling pattern. We include these examples in part because the results are verifiable by hand (contrast Example~\ref{ex:gnarly-slice}), since the branching set, $P(U)$, is an unknot. In addition, these computations lead to the observation (Corollary~\ref{cor:obstruction-destruction}) that for any knot $K\subset S^3$ there are infinitely many embeddings of $K$ in the solid torus, of arbitrarily large winding numbers, that produce patterns for which the homomorphism obstruction in Theorem~\ref{thm:obstruction-statement} vanishes.  Next, in Example~\ref{ex:steve}, we show that a family of satellite patterns determined by the Stevedore knot (embedded in different ways in a solid torus) do not induce homomorphisms on smooth concordance.  We conclude with Example~\ref{ex:gnarly-slice}, which serves to illustrate that our method can handle knots of large Seifert genus, as well as high degree covers, without sweat.

\begin{ex} \label{ex:modified-cable} We consider the $(n,1)$ cabling operation, as well as satellites obtained by crossing changes in the standard diagrams for those cables.  The point here is to give explicit and relatively easily visualizable two-chains for the lifts of $\eta$, in a family of examples where the answer can also be obtained by a geometric argument. (Compare Example 1.3 of~\cite{lidman2022linking}.) 

\begin{figure}
    \centering
    \includegraphics[width=4in]{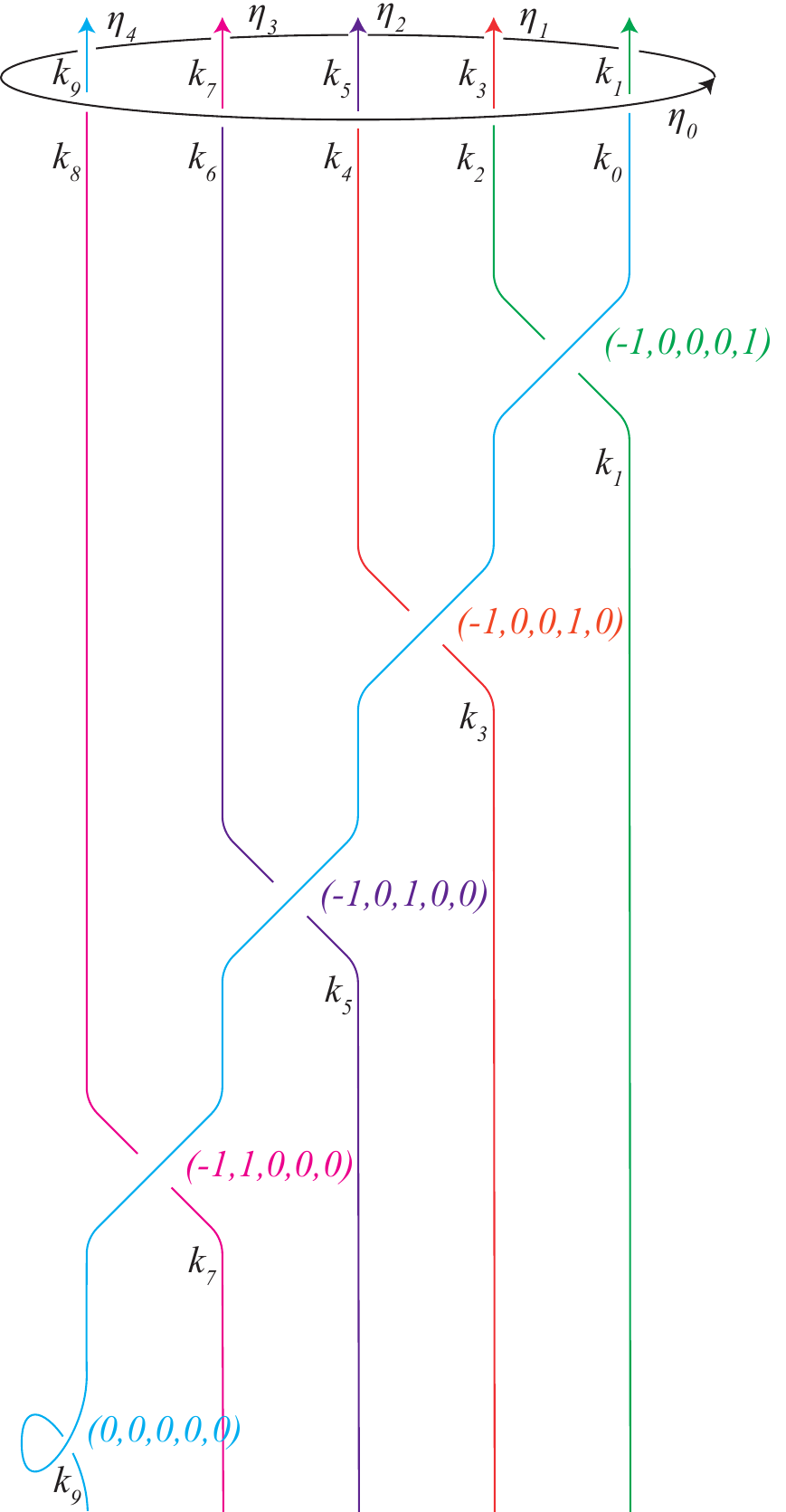}
    \caption{A numbered diagram of the $(5,1)$-cable, illustrating Equation \ref{cablechain.eq} for the case $n=5$. For each arc $k_i$ of the cable, the vector of the same color lists the coefficients $(x_i^1,x_i^2,x_i^3,x_i^4,x_i^5)$  of the 2-cells $A_i^1,\dots,A_i^5$ that appear in the 2-chain bounding $\eta_1$.} 
    \label{fig:51cable}
\end{figure}

First consider the diagram for the $(n,1)$ cable, together with a choice of $\eta$, shown in Figure ~\ref{fig:51cable} in the case $n=5$.  We will construct the 2-chain $\Sigma^1$ bounded by $\eta^1$. The 2-chains bounding the other lifts of $\eta$ can be obtained by acting on the superscripts of the coefficients of the cells in $\Sigma^1$ by a cyclic permutation.  For each arc $k_i$ in the diagram, we give a vector whose entries are the coefficients $(x_i^1,x_i^2,\dots x_i^q)$ of the 2-cells $A_i^1$, $A_i^2$,$\dots$, $A_i^q$ in the 2-chain $\Sigma^1$.  Letting $e_j$ denote the usual standard basis vector in $\mathbb{Q}^q$ with a $1$ in position $j$ and $0$'s elsewhere, we have
\begin{equation}\label{cablechain.eq}
(x_i^1,x_i^2,\dots x_i^q)=
\begin{cases}
e_{n-i/2}-e_1&\text{if } i \text{ even}\\
e_{n-(i-1)/2}-e_1&\text{if } i \text{ odd}
\end{cases}
\end{equation}

These vectors, together with their corresponding arcs $k_i$ of the cable diagram, are shown in Figure ~\ref{fig:51cable}. Lastly, $\Sigma^1$ also contains each $B_i^1$, per Equation~\ref{eq:2-chain-generic}. The 2-cells in the 2-chain described above are pictured, shaded, in Figure~\ref{fig:cable2chain}.

\begin{figure}
    \centering
    \includegraphics[width=\textwidth]{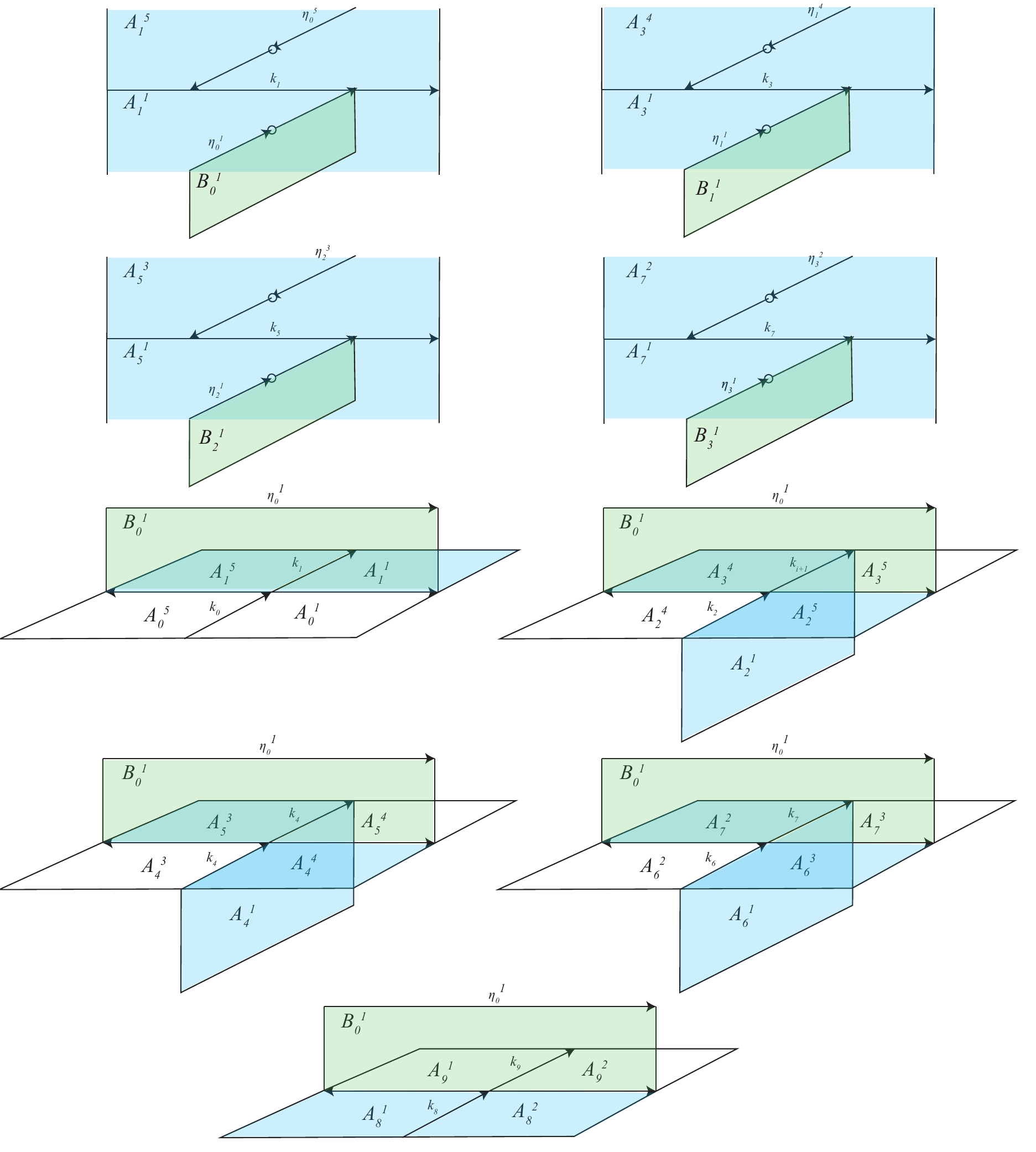}
    \caption{The blue and green 2-cells appear in a 2-chain bounding the lift $\eta^1$ of the pseudo-branch curve $\eta$ for the $(5,1)$-cable, corresponding to the numbered diagram in Figure ~\ref{fig:51cable}.}
    \label{fig:cable2chain}
\end{figure}

To compute the linking number of the lift $\eta^k$ ($k\neq 1$) with $\eta^1$, we count signed intersections of $\eta^k$ with the 2-cells in $\Sigma^1$.  Observe in Figure ~\ref{fig:cable2chain} for the case $n=5$ (an analogous picture can be drawn for the general case) that the only arc $\eta_i^k$ that terminates at a cell of $\Sigma^1$ is $\eta_{n-k}^k$.  Furthermore, the arc $\eta_{n-k}^k$ terminates at the cell $A_{2(n-k)+1}^k$, which according to Equation ~\ref{cablechain.eq} appears in $\Sigma^1$ with coefficient $+1$.  Therefore the linking number $\text{lk}(\eta^k,\eta^1)$ is equal to 1.

 See Table ~\ref{tab:cable_crossing_change} to get a sense for how the 2-chains vary for other satellites obtained from the cable by $k$ crossing changes in the case $n=3$.  See Table ~\ref{tab:crossing_change_linking_nos} to get a sense of how the linking numbers change for larger $n$ and $k$.
 
 In particular, the fact that the linking numbers of the lifts of $\eta$ for the alternating cables are 0 can be deduced by similar reasoning as applied to the standard $(n,1)$ cable above. We omit this technical argument in favor of the geometric one given below.
\end{ex}

\begin{table}[]
    \centering
    \begin{tabular}{|c|c|c|c|}
    \hline
        $k$ & Lift& Coefficients of 2-Chain for $\eta^i$ & $\text{lk}(\eta^1,\eta^i)$\\
        &$\eta^i$&$(x_0^1,x_0^2,x_0^3|x_1^1,x_1^2,x_1^3|x_2^1,x_2^2,x_2^3|x_3^1,x_3^2,x_3^3|x_4^1,x_4^2,x_4^3|x_5^1,x_5^2,x_5^3)$&\\
        \hline
        0&$\eta^1$&$(0,0,0|-1,0,1|-1,0,1|-1,1,0|0,0,0)$&$0$\\
        0&$\eta^2$&$(0,0,0|1,-1,0|1,-1,0|0,-1,1|0,-1,1|0,0,0)$&$1$\\
        0&$\eta^3$&$(0,0,0|0,1,-1|0,1,-1|1,0,-1|1,0,-1|0,0,0)$&$1$\\
        \hline
        \hline
        1&$\eta^1$&$(0,0,0|-1,2,-1|0,1,-1|1,0,-1|1,0,-1|0,0,0)$&$0$\\
        1&$\eta^2$&$(0,0,0|-1,-1,2|-1,0,1|-1,1,0|-1,1,0|0,0,0)$&$0$\\
        1&$\eta^3$&$(0,0,0|2,-1,-1|1,-1,0|0,-1,1|0,-1,1|0,0,0)$&$0$\\
        \hline
        \hline
        2&$\eta^1$&$(0,0,0|-2,1,1|0,-3,3|0,-2,2|0,-1,1|0,0,0)$&$0$\\
        2&$\eta^2$&$(0,0,0|1,-2,1|3,0,-3|2,0,-2|1,0,-1|0,0,0)$&$-1$\\
        2&$\eta^3$&$(0,0,0|1,1,-2|-3,3,0|-2,2,0|-1,1,0|0,0,0)$&$-1$\\
        \hline
          
    \end{tabular}
    \caption{Outline of linking number computation for patterns obtained from the standard diagram of the $(3,1)$ cable by changing the $k$ rightmost crossings. Refer to Table~\ref{notation.tab} for the 2-cells whose coefficients are given by the $x_i^j$.  The numbering of the arcs $k_i$ corresponding to these coefficients is analogous to that of Figure~\ref{fig:51cable}, with the arc $k_0$ in the lower left.}
    \label{tab:cable_crossing_change}
\end{table}

\begin{table}[]
    \centering
    \begin{tabular}{|c|c|c|}
    \hline
         $n$&$k$&$\text{lk}(\eta^1,\eta^2),\text{lk}(\eta^1,\eta^3),\dots,\text{lk}(\eta^1,\eta^n)$  \\
         \hline
         $3$& $0$ & $1,1$\\
         $3$& $1$ & $0,0$\\
         $3$& $2$ & $-1,-1$\\
         \hline\hline
          $5$& $0$ & $1,1,1,1$\\
         $5$& $1$ & $0,1,1,0$\\
         $5$& $2$ & $0,0,0,0$\\
         $5$& $3$ & $0,-1,-1,0$\\
         $5$& $4$ & $-1,-1,-1,-1$\\
         \hline\hline 
         $7$& $0$ &$1,1,1,1,1,1$\\
         $7$& $1$ & $0,1,1,1,1,0$\\
         $7$& $2$ & $0,0,1,1,0,0$\\
         $7$& $3$ &$0,0,0,0,0,0$\\
         $7$& $4$ &$0,0,-1,-1,0,0$\\
         $7$& $5$ &$0,-1,-1,-1,-1,0$\\
         $7$& $6$ &$-1,-1,-1,-1,-1,-1$\\
         \hline
    \end{tabular}
    \caption{Linking numbers for the pattern $C_{n,k}$ obtained from the $(n,1)$ cable by $k$ crossing changes.}
    \label{tab:crossing_change_linking_nos}
\end{table}

It is implicit in~\cite{miller2023homomorphism, lidman2022linking} that alternating cabling patterns do not induce homomorphisms on concordance, and also that the linking number obstruction vanishes for these patterns. Indeed, if $P(U)$ is isotopic to $-P(U)$ inside $S^1\times D^2$, then the linking numbers between lifts of $\eta$ to a cyclic branched cover of $P(U)$ must be zero: on the one hand, mirroring changes the sign of these linking numbers (while the choice of orientation on the branching set leaves the numbers unchanged); on the other hand, the pattern is isotopic to its inverse. In other words, the homomorphism obstruction vanishes for patterns which are amphichiral inside the solid torus. This property is not affected by introducing local knotting, as we now recall. 
 
The following is well known.
\begin{lem} \label{lem:local-knotting} 
Let $K\subset S^3$ be a knot and $\eta, \gamma\subset S^3\backslash K$ two disjoint simple closed curves. Let $S$ be a 2-sphere embedded in $S^3$ so that the Heegaard splitting determined by $S$, $S^3=B^3_1\cup_S B^3_2$, has the property that ${B}^3_2\cap K$ is an unknotted arc $\xi$  properly embedded in $B^3_2$; and $B^3_1$ contains both $\eta$ and $\gamma$ in its interior. The linking numbers between lifts of $\eta$ and $\gamma$ to $\Sigma_q(K)$ are not affected by local knotting of $\xi$. Put differently, if a connected sum $K\#J$ is formed by replacing $\xi$ with a knotted 1-tangle, the linking numbers between lifts of $\eta$ and $\gamma$ to $\Sigma_q(K)$ are equal to the linking numbers between the corresponding lifts to $\Sigma_q(K\#J)$. 
\end{lem}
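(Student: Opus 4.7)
The plan is to realize $\widetilde{B_1^3}$, the $q$-fold cyclic branched cover of $B_1^3$ along $B_1^3\cap K$, as a common submanifold of $\Sigma_q(K)$ and $\Sigma_q(K\#J)$, and to show that the entire linking computation lives inside this common piece.

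First I would invoke the standard splitting of cyclic branched covers along a separating sphere: since the $q$-fold cyclic cover of $S^2$ branched at two points is again $S^2$, the preimage $\widetilde{S}$ of $S$ is a 2-sphere, and
\[
\Sigma_q(K)=\widetilde{B_1^3}\cup_{\widetilde{S}} B^3, \qquad \Sigma_q(K\#J)=\widetilde{B_1^3}\cup_{\widetilde{S}}\bigl(\Sigma_q(J)\setminus B^3\bigr).
\]
The piece $\widetilde{B_1^3}$ is \emph{literally the same} 3-manifold in both decompositions, because $B_1^3\cap K=B_1^3\cap(K\#J)$, and it contains every lift of $\eta$ and $\gamma$. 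Applying Mayer--Vietoris with rational coefficients to both decompositions, the vanishing $H_1(\widetilde{S};\mathbb{Q})=0$ yields an injection $H_1(\widetilde{B_1^3};\mathbb{Q})\hookrightarrow H_1(\Sigma_q(K\#J);\mathbb{Q})$, and (since $B^3$ is contractible) an isomorphism $H_1(\widetilde{B_1^3};\mathbb{Q})\xrightarrow{\cong} H_1(\Sigma_q(K);\mathbb{Q})$. Consequently, a lift of $\eta$ or $\gamma$ that is rationally null-homologous in either cover is already null-homologous in $\widetilde{B_1^3}$, and hence bounds a rational 2-chain supported entirely inside $\widetilde{B_1^3}$.

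To conclude, I would use that the linking number of two disjoint, rationally null-homologous 1-cycles equals the signed intersection number of one cycle with any rational 2-chain bounded by the other, and that this intersection can be computed inside any submanifold containing both the chain and the cycle. Choosing the bounding 2-chain for a lift of $\eta$ and the lift of $\gamma$ to both lie in $\widetilde{B_1^3}$, the signed intersection takes place in a 3-manifold that is insensitive to the local knotting of $\xi$, so the linking numbers in $\Sigma_q(K)$ and $\Sigma_q(K\#J)$ agree. The whole argument hinges on identifying $\widetilde{B_1^3}$ as a common submanifold; the accompanying Mayer--Vietoris step is the only real content, and I expect no substantive obstacle beyond setting up these identifications correctly.
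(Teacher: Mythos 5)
Your proposal is correct and follows essentially the same route as the paper: decompose both covers along the lift of $S$ (a sphere, since the branched cover of $S^2$ over two points is $S^2$), note that the piece over $B^3_1$ is common to $\Sigma_q(K)$ and $\Sigma_q(K\#J)$ and contains all lifts of $\eta$ and $\gamma$, use Mayer--Vietoris to see that rational null-homology is equivalent in the two covers and that a bounding rational $2$-chain can be taken in the common piece, and compute the linking number as an intersection number there. Your Mayer--Vietoris justification for pushing the chain into the common submanifold is a slightly more explicit version of the paper's assertion that the chain ``can be assumed disjoint'' from the lifted ball, but the argument is the same.
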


\begin{proof}
    We first note that the linking numbers of both $\eta$ and $\gamma$ with $K$ are not changed by local knotting of $\xi$ inside $B^3_2,$ so the preimage of $\eta$ (resp. $\gamma$) has the same number of components in $\Sigma_q(K)$ as in $\Sigma_q(K\#J)$ and there is a natural identification between the components in the two covers.

    The $q-$fold cyclic cover of ${B}^3_2$ branched along the trivial 1-tangle $\xi$ is also a 3-ball, denoted $\widehat{B}^3_2$.  The $q-$fold cyclic cover $\Sigma_q(K\#J)$ can be obtained from $\Sigma_q(K)$ by replacing the interior of  $\widehat{B}^3_2$ with a punctured copy of $\Sigma_q(J)$. 
    
    Now let $\eta^k, \gamma^j$ be two connected components of the pre-images of $\eta$ and $\gamma$, respectively, in $\Sigma_q(K)$. Since $\eta, \gamma$ are disjoint from ${B}^3_2$, we know that $(\eta^k \cup \gamma^j)\cap \widehat{B}^3_2 =\emptyset$.   
By a Meyer-Vietoris argument,  $\eta^k$ or $\gamma^j$ are rationally nullhomologous in $\Sigma_q(K)$ if and only if they are so in $\Sigma_q(K\#J)$. Thus, the linking number is defined in $\Sigma_q(K)$ if and only if it's defined in $\Sigma_q(K\#J)$. Assume that this is the case. Then, some integer multiple of $\eta^k$ bounds a 2-chain $N$ in $\Sigma_q(K)$. This 2-chain can be assumed disjoint from $\widehat{B}^3_2$ and therefore can be used to compute the linking number between $\eta^k$ and $\gamma^j$ in $\Sigma_q(K\#J)$.
\end{proof}

\begin{cor} \label{cor:obstruction-destruction}  Let $K\subset S^3$ be a knot and $q>2$ a prime. $K$ admits an embedding in $S^1\times D^2$ of winding number $q$ with the property that, for the satellite pattern $P$ determined by this embedding, the linking numbers between the lifts of $\eta$ to $\Sigma_q(K)$ are all 0. In particular, the obstruction from~\cite{lidman2022linking} can not be used to show that the pattern  $P$ does not induce a homomorphism on smooth concordance.  
\end{cor}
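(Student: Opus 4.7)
The plan is to combine two ingredients: the existence, established in Example \ref{ex:modified-cable}, of a pattern with winding number $q$ whose lifts of $\eta$ have all pairwise linking numbers zero; and the local-knotting invariance of Lemma \ref{lem:local-knotting}, which lets us adjust the companion-closure $P(U)$ without disturbing these linking numbers. The crucial point is to arrange the first ingredient so that $P(U) = U$, since local knotting can only produce closures of the form $P(U) \# J$, and for arbitrary $K$ we must be able to absorb $K$ into $J$.

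Concretely, let $A_q$ denote the ``alternating cable'' pattern obtained from the standard $(q,1)$-cable by changing the $(q-1)/2$ rightmost crossings. The amphichirality argument immediately preceding the corollary (or directly the computation of Example \ref{ex:modified-cable}) shows that all linking numbers between lifts of $\eta$ to $\Sigma_q(A_q(U))$ vanish, and the pattern clearly has winding number $q$ because crossing changes preserve winding. To see that $A_q(U)$ is the unknot, view the cable as the closure of a $q$-strand braid whose word uses each generator $\sigma_i^{\pm 1}$ exactly once; iterated Markov destabilization then strips off the final letter at every step, reducing to the closure of $\sigma_1 \in B_2$, which is the unknot.

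The last step invokes Lemma \ref{lem:local-knotting}: choose a $3$-ball $B$ in the interior of $S^1 \times D^2$ meeting $A_q$ in an unknotted arc and disjoint from the meridian $\eta$, and replace that arc with a $1$-tangle whose closure is $K$. This produces a new pattern $P$ of winding number $q$ whose companion-closure is $A_q(U) \# K = U \# K = K$; the lemma guarantees that the lifts of $\eta$ remain rationally null-homologous in $\Sigma_q(K) = \Sigma_q(P(U))$ and that their pairwise linking numbers are still identically zero. Since the ``not identically zero'' hypothesis of Theorem \ref{thm:obstruction-statement} fails, that obstruction cannot be used to rule out $P$ inducing a homomorphism on smooth concordance. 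The main delicate point of the argument is the identification $A_q(U) = U$; without it, local knotting would produce only closures of the restricted form $A_q(U) \# J$, and we could not realize an arbitrary $K$.
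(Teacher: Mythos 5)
Your proposal is correct and follows essentially the same route as the paper: take an alternating cable pattern of winding number $q$ (whose lifted linking numbers vanish by the amphichirality argument, or by the direct computation in Example~\ref{ex:modified-cable}), and then tie $K$ into it as a local knot, invoking Lemma~\ref{lem:local-knotting}. The only addition is your explicit Markov-destabilization verification that the alternating cable closes up to the unknot, a fact the paper asserts without proof; this is a worthwhile detail but not a different argument.
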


Of course, if $K$ is not slice, no satellite operation with $P(U)=K$ will induce a homomorphism on concordance.

\begin{proof}
    Let $P'$ a pattern be an alternating cable of winding number $q$. 
    Since $P'$ is isotopic in $S^1\times D^2$ to its mirror, the linking number obstruction vanishes for $P'$. By Lemma~\ref{lem:local-knotting}, introducing $K$ as a local knot in $P'(U)$ does not affect the value of the obstruction. This produces the desired embedding of $K$ into the solid torus.
\end{proof}

On the other hand, the computations we did on the side, only a small number of which are included here, convinced us that that the homomorphism obstruction rarely vanishes.  Next, we give one family of (slice) examples for which the relevant linking numbers take non-zero values.  

\begin{ex}\label{ex:steve} We consider several embeddings, of increasing winding number, of the Stevedore knot into the solid torus, and we show that the satellite operations determined by the resulting patterns do not induce homomorphisms on concordance. The linking numbers between the lifts of $\eta$ are given in Table~\ref{tab:6_1}. We remark that the lifts of $\eta$ in this example are not nullhomologous over $\mathbb{Z}$ so we appeal to Theorem~\ref{thm:obstruction-2}.  
We compute the linking numbers between lifts of $\eta$ in the 2, 3, 4 and 5- fold covers of $K$, just to illustrate that it is easy, even though linking numbers in the 2-fold covers suffice to obstruct the pattern from inducing a homomorphism on concordance. Note that, in each case, an odd multiple of each $\eta_i$ bounds, so its order in homology is odd, as required in the hypotheses of Theorem~\ref{thm:obstruction-2}. Consequently, the order of the lift of each $\eta_i$ is odd as well, so we can apply the Theorem.

\begin{figure}
    \centering
    \includegraphics[width=4in]{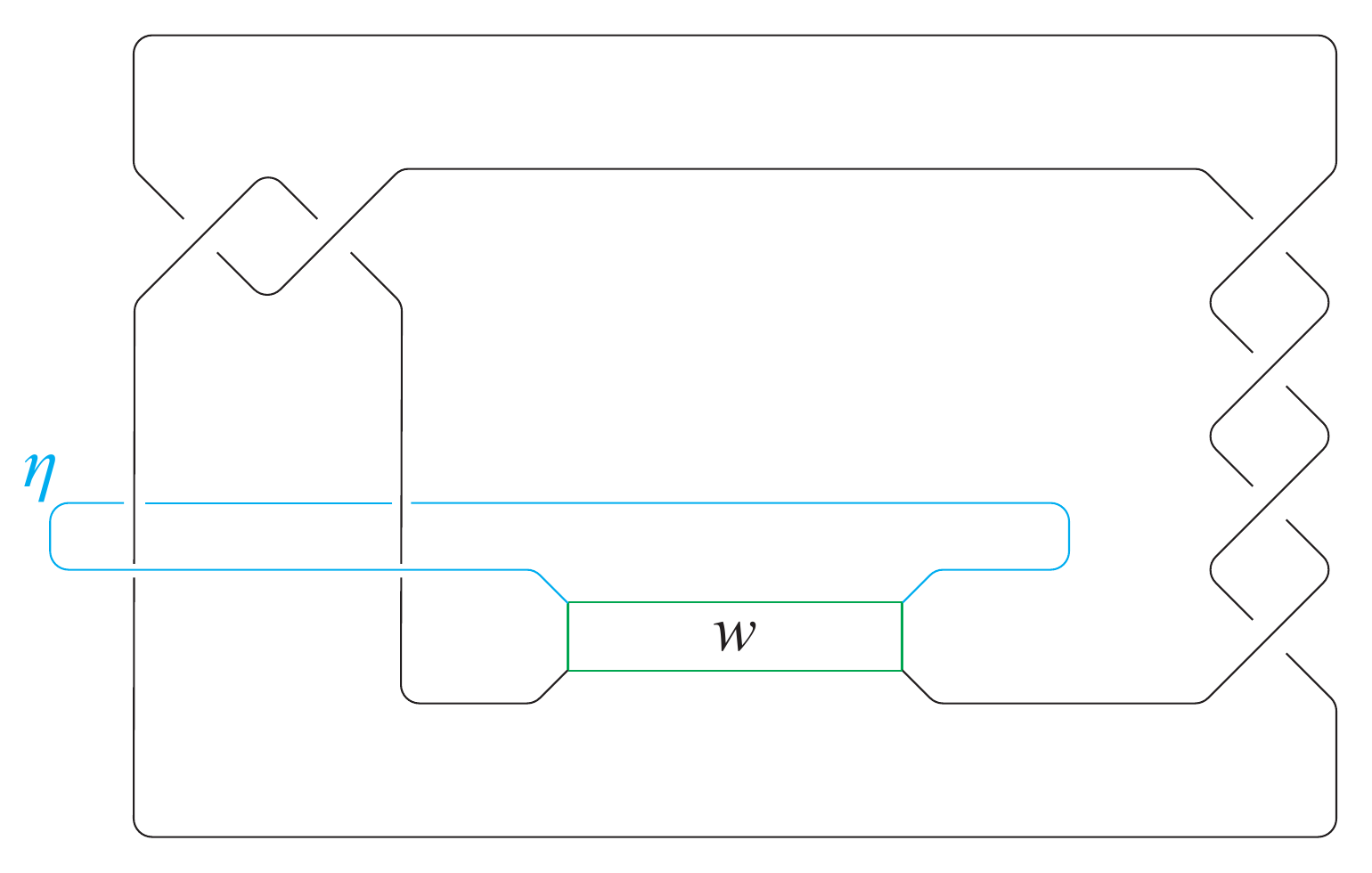}
    \caption{A family of patterns with increasing winding number, each with underlying knot the Stevedore knot. 
 The box contains $w$ full twists, ensuring that the winding number of the pattern in the solid torus is $w$.}
    \label{fig:stevedorefamily}
\end{figure}

\begin{table}
    \centering
    \begin{tabular}{|c|c|c|c|}
    \hline
         Degree  &Winding & Multiple of $\eta_1$& $\text{lk}(\eta^1,\eta^2),\text{lk}(\eta^1,\eta^3),\dots,\text{lk}(\eta^1,\eta^q)$ \\
         of cover $q$& number $w$& which bounds in $\Sigma_q(K)$&\\
         \hline
        $2$&$0$&$9$&$2/9$\\
         \hline
         $2$ & $2$ &$9$& $-7/9$\\
         \hline
         $2$ & $4$ &$9$& $-16/9$\\
         \hline
         $2$ & $6$ &$9$& $-25/9$\\
         \hline
         $2$ & $8$ &$9$& $-34/9$\\
        \hline\hline
        $3$ & $0$ &$7$& $1/7$, $1/7$\\
         \hline
         $3$ & $3$ &$7$& $-6/7$, $-6/7$\\
         \hline
         $3$ & $6$ &$7$& $-13/7$, $-13/7$\\
         \hline
         $3$ & $9$ &$7$ &$-20/7$, $-20/7$\\
         \hline 
         $3$ & $12$ &$7$ &$-27/7$, $-27/7$\\
         \hline\hline
         $4$ & $0$ &$45$& $1/9$, $4/45$,  $1/9$\\
         \hline
         $4$ & $4$ &$45$& $-8/9$, $-41/45$,   $-8/9$\\
         \hline
         $4$ & $8$ &$45$& $-17/9$, $-86/45$, $-17/9$\\
         \hline
         $4$ & $12$ &$45$& $-26/9$, $-131/45$,   $-26/9$\\
         \hline 
         $4$ & $16$ &$45$& $-35/9$, $-176/45$,   $-35/9$\\
         \hline\hline
          $5$ & $0$ &$31$& $3/31$, $2/31$, $2/31$, $3/31$\\
         \hline
         $5$ & $5$ &$31$& $-28/31$, $-29/31$, $-29/31$, $-28/31$\\
         \hline
         $5$ & $10$ &$31$& $-59/31$, $-60/31$, $-60/31$, $-59/31$\\
         \hline
         $5$ & $15$ &$31$& $-90/31$, $-91/31$, $-91/31$, $-90/31$\\
         \hline 
         $5$ & $20$ &$31$& $-121/31$, $-122/31$, $-122/31$, $-121/31$\\
         \hline
         
    \end{tabular}
    \caption{Linking numbers for Example~\ref{ex:steve}:  Stevedore patterns with increasing winding number $w$. See Figure~\ref{fig:stevedorefamily}.}
    \label{tab:6_1}
\end{table}
    
\end{ex}

To conclude, we compute the obstruction in a couple of cases where, in our estimate, the classical methods of~\cite{akbulut1980branched} would be exceedingly challenging to use in practice, at least by hand (and we are not aware of any implementation of the technique). The patterns here are determined by two-bridge slice knots whose Seifert genus grows. For these examples, we also perform the computation for  high degree covers, i.e. for relatively large values of $q$.

\begin{figure}
    \centering
    \includegraphics[width=\textwidth]{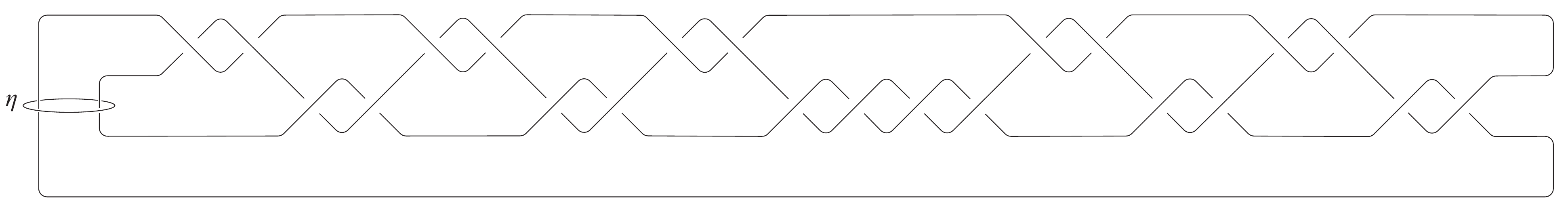}
    \caption{The pattern $P_2$ in the family of slice patterns $P_m$ with winding number 0.}
    \label{fig:2bridgeslicefamily}
\end{figure}

\begin{ex}\label{ex:gnarly-slice} The  2-bridge knots $C(a_1,a_2,\dots, a_n, x, x+2, a_n, \dots, a_2, a_1)$ in the notation of~\cite{lamm2022symmetric} are ribbon.  In this example, we set $a_i=2$ for all $i$, $x=2$, and $n=2m$. We obtain a family of satellite patterns $P_m$ by embedding each of these knots knot in the solid torus in the manner indicated in Figure~\ref{fig:2bridgeslicefamily} for the case $m=2$.  Since all twist parameters are even, the Seifert genus of the knot $P_m$ is $m+1$. The linking numbers of the lifts of $\eta$ for the first few values of $m$ are shown in Table~\ref{tab:2bridgeslice}. Note that, while we don't compute the order in homology of the curves $\eta_i$ precisesly, we verify that this order is odd as it divides an odd number in each case. Based on our computations, a sampling of which is included in Table~\ref{tab:2bridgeslice}, we would be surprised if linking numbers failed to obstruct any of the patterns $P_m$ from inducing homomorphisms on concordance, using Theorem~\ref{thm:obstruction-2}. 
\end{ex}

 We invite the reader to test any pattern they are interested in, using the code provided in \cite{cahn2023cyclic} and the instructions given in the Appendix.

\begin{table}
    \centering
    \begin{tabular}{|c|c|c|c|c|}
    \hline
        $m$ & Seifert & Degree   & Multiple of $\eta_1$  &$\text{lk}(\eta^1,\eta^2),\text{lk}(\eta^1,\eta^3),\dots,\text{lk}(\eta^1,\eta^q)$  \\
        &genus&of cover $q$& which bounds in in $H_1(\Sigma_q)$&\\
        \hline
         0& 1 & 2&$9$&$4/9$\\
         \hline
         $0$&1&$3$&$7$& $2/7$, $2/7$\\
         \hline
         $0$&1&$4$&$45$& $2/9$, $8/45$,  $2/9$\\
         \hline 
         $0$&1&$5$&$31$&$6/31$, $4/31$, $4/31$, $6/31$\\
         \hline\hline
         $1$&3 & $2$&$289$& $120/289$\\
         \hline
         $1$&3&$3$&$151$& $40/151$, $40/151$\\
         \hline
         $1$&3&$4$&$1875$& $60/289$,$3036/18785$, $60/289$\\
         \hline 
         $1$&3&$5$&$7481$& $1350/7481$, $896/7481$,  $896/7481$, $1350/7481$\\
         \hline\hline
         $2$& 5& $2$&$9801$& $4060/9801$\\
         \hline
         $2$&5&$3$&$3457$& $912/3457$, $912/3457$\\
         \hline
         $2$&5&$4$&$8830701$& $2030/9801$, $1405300/8830701$,       $2030/9801$\\
         \hline 
         $2$&5&$5$&$2042281$& $369248/2042281$, $239272/2042281$, \\
         &&&& $239272/2042281,$ $369248/2042281$\\
         \hline
    \end{tabular}
    \caption{Linking numbers for Example~\ref{ex:gnarly-slice}:  patterns with winding number 0 determined by the two-bridge slice knots $P_m$ in Figure~\ref{fig:2bridgeslicefamily}.}
    \label{tab:2bridgeslice}
\end{table}

\section{Acknowledgments} This work was partially supported by NSF grants DMS-2145384 to PC and DMS-2204349 to AK. The core of the paper was chewed through during the 2023 Moab Topology Conference. We thank the organizers Nathan Geer, Mark Hughes, Maggie Miller and Matt Young for all the fruit.

\bibliographystyle{plain}
\bibliography{cycliclinking}

\section{Appendix}
We explain how to use the code in \cite{cahn2023cyclic} to apply Theorems~\ref{thm:mainA} and~\ref{thm:mainB}. See Figure~\ref{fig:screenshot} for a concrete example.

The first step is to fix an oriented diagram of the link $K\cup \eta\cup \gamma$ where $K$ has writhe 0. Fix a point on $K$ and label the corresponding arc by $k_0$. Proceed in the direction of the orientation of $K$ and assign to consecutive arcs labels $k_i$, increasing the subscript each time the knot encounters an overpass. See Figure~\ref{fig:31cablecells} for reference. Proceed similarly with $\eta$ and $\gamma$, assigning labels $\eta_i$ and $\gamma_i$, respectively.

Next, we input the diagram into the computer, which amounts to recording the following information: the number of components; the sign of each crossing; the over-strand of each crossing. For the latter, this means specifying a component of the link\footnote{Note that we have stated our theorems for the case of two pseudo-branch curves, and we focus on applications involving a single pseudo-branch curve, $\eta$. 
However, the implementation of our algorithm allows for computing linking numbers between lifts of more than two pseudo-branch curves.} and an arc on this component.  The signs are recorded as ``1" and ``-1," and the over-strands are recorded as by specifying which link component the over-strand belongs to, and which arc it is of this component.

To record the above information, as well as to perform computations, we use the following functions:

\begin{enumerate}
    \item $\text{Link}\_\text{component}(~,~ ,~)$ introduces a component to the link diagram. It takes as input: an integer $c$, the number of components in the diagram; a list of 1-s and $-1$-s, the signs of crossings along this new component; a list of integer pairs $[a, b]$ where $a\in \{0, 1, \dots, n-1\}$ specifies the component containing the overstrand at each crossing, and $b$ specifies the arc on this component that is the overstrand. Each new component should be named. In Figure~\ref{fig:screenshot}, the names used are ``knot" and ``$\text{pb}\_1$".
    \item $\text{diagram}(c,m)$ takes as input two integers: $c$ is the number of components in the link diagram; $m\in \{0, 1, \dots, m-1\}$ specifies which component is the branch curve.
    \item $\text{diagram.add}\_\text{component}$ introduces a component, among the ones created in $(1)$, to the diagram. 
    \item  $q\in\mathbb{N}$ specifies the degree of the cover.
    \item $\text{cover}=\text{cyclic}\_\text{cover}(q, \text{diagram})$ generates the combinatorial data encoding the cell structure described in Section~\ref{sec:cellular}.
    \item $\text{cover}.\text{two}\_\text{chain}(a,b)$ prints the coefficients of 2-cells the 2-chain found in Theorem~\ref{thm:mainA}, whose boundary is the $b$-th lift of the $a$-th component of the link diagram. The two cells paired with these coefficients are ordered as follows:

    $$(x_0^1,\dots,x_0^q, x_1^1,\dots, x_1^q,\dots,x_{n-1}^1,\dots, x_{n-1}^q)$$
    \item $\text{cover}.\text{linking}\_\text{number}(a,i,b,j)$ prints the linking number between the $i$-th lift of the $a$-th component and the $j$-th lift of the $b$-th component of the link.
\end{enumerate}

\begin{figure}
    \centering
    \includegraphics[width=7in]{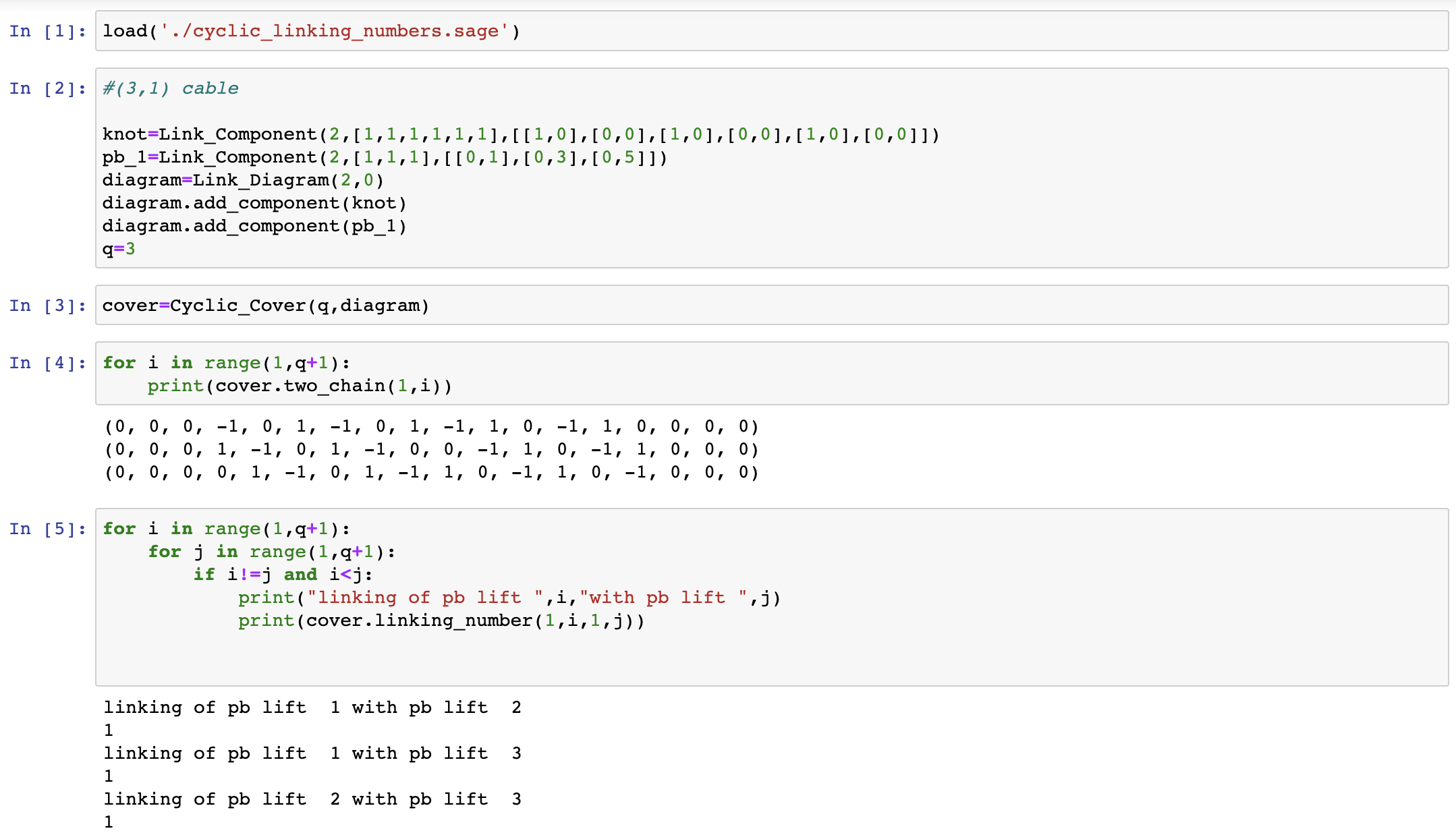}
    \caption{Computing the linking numbers between lifts of $\eta$ for the (3, 1) cable.}
    \label{fig:screenshot}
\end{figure}

\end{document}